\newcommand*\rel@kern[1]{\kern#1\dimexpr\macc@kerna}
\newcommand*\widebar[1]{%
  \begingroup
  \def\mathaccent##1##2{%
    \rel@kern{0.8}%
    \overline{\rel@kern{-0.8}\macc@nucleus\rel@kern{0.2}}%
    \rel@kern{-0.2}%
  }%
  \macc@depth\@ne%
  \let\math@bgroup\@empty\let\math@egroup\macc@set@skewchar%
  \mathsurround\z@ \frozen@everymath{\mathgroup\macc@group\relax}%
  \macc@set@skewchar\relax
  \let\mathaccentV\macc@nested@a%
  \macc@nested@a\relax111{#1}%
  \endgroup
}
\DeclareMathOperator*{\argmax}{argmax}
\DeclareMathOperator*{\range}{Range\,}
\DeclareMathOperator*{\argmin}{argmin}
\DeclareMathOperator*{\trace}{trace}
\newcommand{\tran}{\ensuremath{\mkern-1.5mu\mathsf{T}}}
\renewcommand{\leq}{\leqslant}
\renewcommand{\geq}{\geqslant}
\newcommand{\cmark}{\textcolor{green}{\ding{51}}}%
\newcommand{\xmark}{\textcolor{red}{\ding{55}}}%
\newcommand\reallywidehat[1]{%
  \savestack{\tmpbox}{\stretchto{%
      \scaleto{%
        \scalerel*[\widthof{\ensuremath{#1}}]{\kern-.6pt\bigwedge\kern-.6pt}%
        {\rule[-\textheight/2]{1ex}{\textheight}}
      }{\textheight}%
    }{0.5ex}}%
  \stackon[1pt]{#1}{\tmpbox}%
}
\newcounter{mymac@matlab}
\newcommand{\MATLAB}{MATLAB%
  \ifnum\value{mymac@matlab}<1%
  \textsuperscript{\textregistered}%
  \setcounter{mymac@matlab}{1}%
  \fi%
}
\DeclareOldFontCommand{\rm}{\normalfont\rmfamily}{\mathrm}
\DeclareOldFontCommand{\sf}{\normalfont\sffamily}{\mathsf}
\DeclareOldFontCommand{\tt}{\normalfont\ttfamily}{\mathtt}
\DeclareOldFontCommand{\bf}{\normalfont\bfseries}{\mathbf}
\DeclareOldFontCommand{\it}{\normalfont\itshape{}}{\mathit} 
\DeclareOldFontCommand{\sl}{\normalfont\slshape{}}{\@nomath\sl}
\DeclareOldFontCommand{\sc}{\normalfont\scshape}{\@nomath\sc}
\newtheorem{Theorem}{Theorem}[section]
\newtheorem{Cor}{Corollary}[section]
\newtheorem{remark}[section]{Remark}
\newcounter{example}[section]
\newenvironment{example}[1][]{\refstepcounter{example}\par\medskip
  \textbf{Experiment~\theexample{}} }{}
\pgfplotsset{
  compat = 1.16, 
  every axis/.append style={%
    cycle list name=mylist,
    xminorticks=true,
    yminorticks=true,
    major grid style={line width=.2pt,draw=black!10},
  },%
  every axis plot/.append style={%
    line width=1.0pt,
    mark size=3pt
  },%
  unbounded coords = jump, 
} 
\tikzset{external/system call = {%
    lualatex \tikzexternalcheckshellescape%
    -halt-on-error
    -interaction=batchmode
    -jobname "\image" "\texsource"}}
\newcommand{%
  \tikzexternalenable%
  \tikzsetnextfilename{}%
  \filemodCmp{.tikz}{externalize/.pdf}%
  {\tikzset{external/remake next}}{}%
  \input{.tikz}%
  \tikzexternaldisable%
}[1]{%
  \tikzexternalenable%
  \tikzsetnextfilename{#1}%
  \filemodCmp{#1.tikz}{externalize/#1.pdf}%
  {\tikzset{external/remake next}}{}%
  \input{#1.tikz}%
  \tikzexternaldisable%
} 
\definecolor{mycolor1}{HTML}{66C2A5}
\definecolor{mycolor2}{HTML}{FC8D62}
\definecolor{mycolor3}{HTML}{8DA0CB}
\begin{document}
\title{\textcolor{black}{On an integrated Krylov-ADI solver for large-scale Lyapunov equations}}

\author[$\dagger$]{Peter Benner}
\affil[$\dagger$]{Research Group Computational Methods in Systems
  and Control Theory (CSC),
  Max Planck Institute for Dynamics of Complex Technical Systems, Sandtorstra\ss{e} 1, 39106 Magdeburg, Germany.\authorcr%
  \email{benner@mpi-magdeburg.mpg.de}, \orcid{0000-0003-3362-4103}}

\author[$\ast$]{Davide Palitta}
\affil[$\ast$]{Universit\`a di Bologna, Centro $AM^2$, Dipartimento di Matematica, Piazza di Porta S.
Donato 5, 40127 Bologna, Italy.
 \authorcr%
  \email{davide.palitta@unibo.it
  }, \orcid{0000-0002-6987-4430}}

\author[$\S$]{Jens Saak}
\affil[$\S$]{Research Group Computational Methods in Systems
  and Control Theory (CSC),
  Max Planck Institute for Dynamics of Complex Technical Systems, Sandtorstra\ss{e} 1, 39106 Magdeburg, Germany.\authorcr%
  \email{saak@mpi-magdeburg.mpg.de}, \orcid{0000-0001-5567-9637}}

\shorttitle{\textcolor{black}{EKSM-ADI for large-scale Lyapunov equations}}
\shortauthor{P. Benner, D. Palitta, J. Saak }


\abstract{
  One of the most computationally expensive steps of the low-rank ADI method for large-scale Lyapunov equations is the solution of a shifted linear system at each iteration. We propose the use of the extended Krylov subspace method for this task. In particular, we illustrate how a single approximation space can be constructed to solve all the shifted linear systems needed to achieve a prescribed accuracy in terms of Lyapunov residual norm.
  Moreover, we show how to fully merge the two iterative procedures in order to
  obtain a novel, efficient implementation of the low-rank ADI method, for an important class of equations.
  Many state-of-the-art algorithms for the shift computation can be easily incorporated into our new scheme, as well. Several numerical results illustrate the potential of our novel procedure when compared to an implementation of the low-rank ADI method based on sparse direct solvers for the shifted linear systems.
}

\keywords{Lyapunov equations, low-rank ADI, extended Krylov method, shifted linear systems.
}

\msc{15A24, 65F10, 65F30, 15A06
}

\novelty{Extended Krylov subspace methods (EKSM) and the low-rank alternating
  directions implicit (LR-ADI) iteration have been competing methods for the
  solution of large-scale algebraic Lyapunov equations. In this paper, we make
  an important step towards a new method merging them into a combined procedure
  that inherits advantages from both worlds.}

\maketitle
\setcounter{footnote}{7}


\section{Introduction}\label{Introduction}

The low-rank alternating direction implicit
(LR-ADI)~\cite{Li2002,Penzl2000a} method is one of the state-of-the-art
methods for the numerical solution of large-scale Lyapunov equations~\cite{Benner2013,Simoncini2014}.
This linear matrix equation can be encountered in many applications: control and system theory~\cite{Son98,KalFA69}, especially some model reduction techniques for dynamical systems~\cite{Benner2005,Antoulas.05}, 
but also discretization of certain partial differential equations (PDEs)~\cite{Sta91}, and many more. 

We consider Lyapunov equations of the form
\begin{equation}\label{eq:lyap_gen}
  AXE^{\tran}+EXA^{\tran}+BB^{\tran}=0,
\end{equation}
where $A, E\in\mathbb{R}^{n\times n}$, and $B\in\mathbb{R}^{n\times q}$, $q\ll
n$. Moreover, $E$ is supposed to be symmetric positive definite (SPD) and the
matrix pencil 
$(A,E)$ to be asymptotically stable, i.e., its spectrum is contained in the open
left half plane $\mathbb{C}_-$,  which guarantees that a unique solution $X$ exists, it is symmetric positive semidefinite~\cite{Pen97}.

A special case of equation~\eqref{eq:lyap_gen} is attained whenever $E=I$, namely the equation of interest is
\begin{equation}\label{eq:lyap}
  AX+XA^{\tran}+BB^{\tran}=0.
\end{equation}

Oftentimes the coefficient matrix $E$ 
possesses a structured sparsity pattern. For instance, it is (block) diagonal when the matrices stem from a finite element discretization that uses mass-lumping. In this case, we can easily transform equation~\eqref{eq:lyap_gen} and obtain an equation of the form~\eqref{eq:lyap}. This can, for example, be achieved by simply pre- and post-multiplying~\eqref{eq:lyap_gen} by $E^{-\frac{1}{2}}$ to potentially preserve symmetry of \(A\). For the sake of simplicity, we thus focus on equation~\eqref{eq:lyap} in the following. 

In case of very large problem dimensions, the solution $X$ cannot be stored since this matrix is, in general, dense. However, it is well-known that its singular values quickly decay to zero under suitable assumptions, see, e.g.,~\cite{Penzl2000,Baker2015,Gra04,morBenKS16},
 so that accurate low-rank approximations $ZZ^{\tran}\approx X$, $Z\in\mathbb{R}^{n\times t}$, $t\ll n$, can be constructed.
The efficient computation of the low-rank factor $Z$ is the task of
LR-ADI and of all other \emph{low-rank methods}. See, e.g.,  the survey papers~\cite{Simoncini2014,Benner2013} for further details on different low-rank methods for linear matrix equations.

It is well-known that the convergence rate of the LR-ADI method is strictly
connected to the selection of some parameters
${\{p_i\}}_{i=1,\ldots,j}\subset\mathbb{C}_-$ called \emph{shifts}\footnote{We only consider
  \emph{proper} sets of shifts, namely ${\{p_i\}}_{i=1,\ldots,j}$ is closed with
  respect to complex conjugation.}. The computation of effective shifts is a
highly non-trivial task and it has been a rather active research topic in the
last decades. Many strategies are available in the literature and
these can be divided into two categories: \emph{Offline}
routines~\cite{Penzl2000a,Sabino2006,Wachspress2013}, where the shifts are
computed a-priori, before LR-ADI starts and then, potentially, cyclically reused,
and \emph{online} schemes~\cite{Benner2014-2015,Kuerschner2019}, where the
shifts are computed on the fly within the iterative procedure. 
The name shifts for the values $p_j$ comes from the fact that in each LR-ADI
iteration we need to solve a shifted
linear system with a coefficient matrix of the form $A+p_{j}E$, or $A+p_{j}I$, in case of~\eqref{eq:lyap_gen}, or~\eqref{eq:lyap}, respectively. {\color{black} Notice that since $(A,E)$ (or $A$ in case of~\eqref{eq:lyap}) is asymptotically stable and $\{p_j\}\subset\mathbb{C}_-$, all the linear systems involved in the LR-ADI scheme are well defined.} 

In Algorithm~\ref{LR_ADI_algorithm} we report an implementation of the LR-ADI scheme for the solution of~\eqref{eq:lyap_gen}.
 Notice that Algorithm~\ref{LR_ADI_algorithm} is designed to drastically reduce the amount of complex arithmetic
calculations. Indeed, even
though $A$ and $B$ in~\eqref{eq:lyap} are real, the shifts $p_j$ are often
complex if $A$ is nonsymmetric, so that complex arithmetic may
occur. See~\cite{Benner2013b},~\cite[Chapter 4]{Kuerschner2016} and references
therein for details and derivations. 
\begin{algorithm}[t]
  \DontPrintSemicolon{}
  \SetKwInOut{Input}{input}\SetKwInOut{Output}{output}
  \Input{$A\in\mathbb{R}^{n\times n}$ stable, $B\in\mathbb{R}^{n\times q}$,
    max.\ iteration count $j_{\max}$,\newline normalized residual bound $\varepsilon>0$.}
  \Output{$Z_{j}\in\mathbb{R}^{n\times jq}$, $Z_{j}Z_{j}^{\tran}=X_{j}\approx X$ approximate solution to~\eqref{eq:lyap}.}
  \BlankLine{}
  Set $W_0=B$, $Z_0=[]$, $j=1$, and select $p_1\in\mathbb{C}_-$\;
  \While{$\|W_{j-1}^*W_{j-1}\|_F\geq\varepsilon\|B^*B\|_F$ {\rm \textbf{and}} $j\leq j_{\max}$}{
    Solve  $(A+p_{j}I)S_{j}=W_{j-1}$\label{line_linearsolve}\;
    Set  $W_j=W_{j-1}-2\text{Re}(p_j)S_j$\;
    \If{$\text{{\rm Im}}(p_j)\neq0$}{
    Set $\beta=\text{Re}(p_j)/\text{Im}(p_j)$ and $p_{j+1}=\widebar p_j$\;
    Set $S_{j+1}=\sqrt{2(\beta^2+1)}\text{Im}(S_j)$\;
    Set $W_{j+1}=W_{j-1}-4\text{Re}(p_j)(\text{Re}(S_j)+\beta\text{Im}(S_j))$\;
    Set $S_{j}=\sqrt{2}(\text{Re}(S_j)+\beta\text{Im}(S_j))$\;
    Set $Z_{j+1}=[Z_{j-1},\sqrt{-2\text{Re}(p_j)}S_j,\sqrt{-2\text{Re}(p_{j+1})}S_{j+1}]$\;
    Set $j=j+1$\;
    }
    \Else{
    Set $Z_j=[Z_{j-1},\sqrt{-2\text{Re}(p_j)}S_j]$\;
    }
    Choose the next shift $p_{j+1}\in\mathbb{C}_-$
    \;
    Set $j=j+1$\;
  }
  \caption{LR-ADI for Lyapunov equations.}%
  \label{LR_ADI_algorithm}
\end{algorithm}

One of the most computationally expensive steps of
Algorithm~\ref{LR_ADI_algorithm} is the solution of the shifted linear systems
with $q$ right-hand sides in line~\ref{line_linearsolve}.
Such a job has to be
carried out at each LR-ADI iteration. In this contribution, we propose to employ state-of-the-art block Krylov subspace methods for this task. In particular, for equation~\eqref{eq:lyap}, we illustrate how to efficiently reuse the approximation space employed at the $j$-th LR-ADI iteration
and utilize it also in the next one. To this end, it is crucial that the right-hand side of the linear system we need to solve at the \mbox{$(j+1)$-st} iteration can be represented in terms of the basis of the subspace employed in the previous iteration. This simple but 
critical observation lets us design a novel, efficient procedure that can lead to noticeable savings in the running time for the solution of~\eqref{eq:lyap}. Indeed, all the LR-ADI steps can be completely merged into the Krylov routine so that the LR-ADI iteration is only implicitly performed.
Moreover, also the LR-ADI shift computation can be incorporated into the framework proposed in this paper.

The following is a synopsis of the paper. Section~\ref{Block Krylov methods for shifted linear systems} is devoted to recalling the general (block) Krylov subspace framework for shifted linear systems. In particular, some details about the extended Krylov subspace method presented in~\cite{Simoncini2010} are given in Section~\ref{The extended Krylov subspace method for shifted linear systems}.
In Section~\ref{Merging the two iterative procedures} we present the main contribution of the paper and we show how to fully merge the LR-ADI iteration into the projection method adopted for the linear system solution. The selection of effective shifts is crucial for attaining a fast convergence in terms of number of LR-ADI iterations, and numerous strategies have been proposed in the literature to accomplish this task; see, e.g.,~\cite{Sta91,Penzl2000a,Sabino2006,BenMS08,Wachspress2013,Benner2014-2015,Kuerschner2019,Saa09}. In Section~\ref{Shift computation} we illustrate how many of these routines can be integrated into our novel framework with no additional cost. The potential of our strategy is depicted in Section~\ref{Numerical examples}, where several numerical results are reported. We close the paper with our conclusions in Section~\ref{Conclusions}. 

Throughout the paper, we adopt the following notation.
The matrix inner product is
defined as $\langle X, Y \rangle_F \colon= \trace(Y^{\tran} X)$ so that the induced norm is $\|X\|_F^2= \langle X, X\rangle_F$. The Kronecker product is denoted by $\otimes$ whereas
$I_n$ and $O_{n\times m}$ denote the identity matrix of order $n$ and the $n\times m$ zero matrix, respectively. Only one subscript is used for a square zero matrix, i.e., $O_{n\times n}=O_n$, and the subscript
is omitted whenever the dimension of $I$ and $O$ is clear from the context. Moreover, $e_i$ is the $i$-th basis vector of the canonical basis of $\mathbb{R}^n$. 
The brackets $[\cdot]$ are used to concatenate matrices of conformal dimensions. In particular, a MATLAB-like
notation is adopted and $[M,N]$ denotes the matrix obtained by putting $M$ on the left of $N$ whereas $[M;N]$ the one obtained by putting $M$ on top of $N$, i.e., $[M;N]={[M^{\tran},N^{\tran}]}^{\tran}$.
If $w\in\mathbb{R}^{n}$, $\text{diag}(w)$ denotes the $n\times n$ diagonal matrix whose $i$-th diagonal entry corresponds to the $i$-th component of $w$. Given $X\in\mathbb{C}^{n\times m}$, we write $X=\text{Re}(X)+\imath\text{Im}(X)$, where $\text{Re}(X)$ and $\text{Im}(X)$ are its real and imaginary parts, respectively, and $\imath$ is the imaginary unit. The complex conjugate of $X$ is denoted by $\widebar X= \text{Re}(X)-\imath\text{Im}(X)$.




\section{Block Krylov methods for shifted linear systems}\label{Block Krylov methods for shifted linear systems} The literature about the numerical solution of shifted linear systems by Krylov subspace methods is rather vast. Indeed, sequences of shifted linear systems arise in many applications belonging to different research areas like control theory~\cite{Datta1991,Laub1985}, wave propagation problems~\cite{Baumann2017}, mechanical systems~\cite{Feriani2000}, quantum chromodynamics~\cite{GLaeSSNER1996}, and many more. 

This algebraic problem is trickier than it looks and many researchers have contributed to its understanding providing important insights on its properties and designing efficient, robust algorithms for its solution. Here is an incomplete list of contributions on numerical schemes for sequences of shifted linear systems and their analysis~\cite{Simoncini2003,Freund1993,Frommer1998,Meerbergen2003,Soodhalter2014,Soodhalter2016,Soodhalter2016a,Baumann2015}.

{\color{black} In this section, we consider sequences of shifted linear systems of the form
\begin{equation}\label{eq:shifted_linearsystems}
  (A+p_{j}I)S_{j}=W,\quad W\in\mathbb{R}^{n\times q},
\end{equation}
where the right-hand side $W$ does not depend on the index $j$, even though, in line~\ref{line_linearsolve} of Algorithm~\ref{LR_ADI_algorithm}, $W_{j-1}$ does change at every LR-ADI iteration. In Section~\ref{Merging the two iterative procedures} we show how to adapt the machinery, presented here, to the case of linear systems of the form $(A+p_{j}I)S_j=W_{j-1}$, arising within the LR-ADI scheme.}

Any Krylov routine {\color{black} for~\eqref{eq:shifted_linearsystems} computes a numerical solution of the form $S_m^{(j)}=S_0+V_{m}Y_{m}^{(j)}\approx S_{j}$, $V_{m}=[\mathcal{V}_1,\ldots,\mathcal{V}_m]\in\mathbb{R}^{n\times m\ell q}$, $\ell\geq 1$\footnote{{\color{black}The value of $\ell$ depends on the adopted approximation space. It holds, $\ell=1$ for the polynomial Krylov subspace in~\eqref{def:polynomialKrylov}, whereas $\ell=2$ for the extended Krylov subspace in~\eqref{def:extendedKrylov}.}}, $\mathcal{V}_i\in\mathbb{R}^{n\times \ell q}$, $i=1,\ldots,m$, $Y_m^{(j)}\in\mathbb{C}^{m\ell q\times q}$}, where the orthonormal columns of $V_m$ span a suitable subspace $\mathcal{K}_m$, namely, $\range(V_m)=\mathcal{K}_m$, $S_0$ is an initial guess, and the matrix $Y_m^{(j)}$ can be computed by imposing different conditions. In particular, $Y_m^{(j)}$ is often computed by either imposing a Galerkin condition on the residual or minimizing the residual norm. For the sake of simplicity, we consider $S_0=O$ in the following.

One of the most common choices for the approximation space $\mathcal{K}_m$ 
is the block Krylov subspace
\begin{equation}\label{def:polynomialKrylov}
  \mathbf{K}_m^\square(A,W)=\range([W,AW,\ldots,A^{m-1}W]).
\end{equation}
See, e.g.,~\cite{OLeary1980,Schmelzer2004,Frommer2017} and the references therein for further details on the block polynomial Krylov subspace $\mathbf{K}_m^\square(A,W)$ and related methods.

However, Simoncini showed in~\cite{Simoncini2010} that the extended Krylov subspace~\cite{DruK98} 
\begin{equation}\label{def:extendedKrylov}
  \mathbf{EK}_m^\square(A,W)=\range([W,A^{-1}W,AW,A^{-2}W,\ldots,A^{m-1}W,A^{-m}W]),
\end{equation}
can be a powerful alternative for the solution of~\eqref{eq:shifted_linearsystems} in many cases. For instance, when $A$ is large and real while the $p_j$'s are complex. See also Section~\ref{The extended Krylov subspace method for shifted linear systems}.

The basis $V_m$ of both the polynomial and extended Krylov subspace can be constructed by means of the (extended) Arnoldi process and the following Arnoldi relation is fulfilled
\begin{equation}\label{eq:Arnoldi}
  AV_{m}=V_{m}T_{m}+\mathcal{V}_{m+1}E_{m+1}^{\tran}\underline{T}_{m},
\end{equation}
where $\underline{T}_m=V_{m+1}^{\tran}AV_m\in\mathbb{R}^{(m+1)\ell q\times m\ell q}$, $T_m$ is its principal square submatrix, and $E_{m+1}=e_{m+1}\otimes I_{\ell q}$. See, e.g.,~\cite{Saad2003,Simoncini2007}.

The Arnoldi relation~\eqref{eq:Arnoldi} is one of the most crucial tools in the
solution of~\eqref{eq:shifted_linearsystems} by Krylov methods. Indeed, it can
be used to show the fundamental \emph{shift-invariance} property of the Krylov
subspaces~\eqref{def:polynomialKrylov} and~\eqref{def:extendedKrylov}, and the
following relation holds true
\begin{equation}\label{eq:shiftedArnoldi}
  (A+p_{j}I_{n})V_{m}=V_{m}(T_{m}+p_{j}I_{m\ell q})+\mathcal{V}_{m+1}E_{m+1}^{\tran}\underline{T}_{m}.
\end{equation}
See, e.g.,~\cite[Equation (2.1)]{Simoncini2003},~\cite[Equation (3.1)]{Simoncini2010}.

Equation~\eqref{eq:shiftedArnoldi} says that we can compute only one approximation space for solving~\eqref{eq:shifted_linearsystems}. In particular, the space constructed using $A$, i.e., $\mathbf{K}_m^\square(A,W)$ or $\mathbf{EK}_m^\square(A,W)$, can be employed, by possibly being expanded, to solve all the shifted linear systems in the sequence~\eqref{eq:shifted_linearsystems}.

Polynomial Krylov subspace methods often need many iterations to achieve the prescribed accuracy, so that a large subspace is constructed. This leads to an increment in both the storage demand and the computational efforts of the selected solution procedure.  
Different strategies have been developed to avoid the construction of a too large subspace. 

With the goal of achieving a fast convergence in terms of number of iterations, the linear system~\eqref{eq:shifted_linearsystems} can be preconditioned, namely
is transformed into an equivalent problem with better spectral properties. However, designing effective preconditioning operators for a sequence of shifted linear systems is a difficult task and often highly problem dependent. Very sophisticated schemes have been proposed in the literature. See, e.g.,~\cite{Luo2014,Bellavia2011,Benzi2003,Bertaccini2004,Anzt2016}. 

Restarted routines are an alternative solution. In this framework, the
approximation space $\mathcal{K}_m$ is expanded until it reaches a prescribed
maximum dimension. If the desired level of accuracy is not achieved, the last
computed basis block $\mathcal{V}_{m+1}$ is employed as initial block in the
construction of a new subspace $\mathcal{K}_m'$. This procedure is iterated
until a stopping criterion is fulfilled. See, e.g.,~\cite{Simoncini2003,
  Frommer1998} and~\cite[Section 3.2.1]{Frommer2017}. However, in our framework the LR-ADI shifts $p_j$'s are often computed on the fly and, thus,
are not all available at the same time. 
Therefore, to fully take advantage of the computational efforts needed to solve the linear system $(A+p_{j-1}I)S_{j-1}=W$, we would have to store all the bases computed during the employed restarted Krylov procedure and use them to solve the $j$-th linear system, as well. Unfortunately, this would destroy all the benefits in terms of storage complexity gained from the restart-paradigm.

In~\cite{Simoncini2010}, Simoncini showed that the employment of the extended Krylov subspace~\eqref{def:extendedKrylov}, in place of~\eqref{def:polynomialKrylov}, often leads to a faster convergence, in terms of iterations, to the point that the constructed subspace is usually smaller than the polynomial counterpart needed to reach the same level of accuracy. 
We, thus, decide to use such an approximation space for the solution of the shifted linear systems within the LR-ADI method and in the next section we recall some details of the extended Krylov subspace method.

Notice that the faster convergence of the extended Krylov subspace~\eqref{def:extendedKrylov} comes with a toll. Indeed, at each iteration, a linear system with $A$ has to be solved during the basis construction. Nevertheless, the increase in the overall workload of the solution process can be limited in general. Indeed, if we want to use a direct solver to invert $A$, for instance, the LU factors of $A$ can be computed once and for all before the LR-ADI scheme starts. On the other hand, if an iterative procedure is employed, analogously a single preconditioner for $A$ has to be designed once.

{\color{black}As already mentioned,} in the formulation~\eqref{eq:shifted_linearsystems} the right-hand side $W$ is fixed, namely it does not depend on the shift index $j$. However, in line~\ref{line_linearsolve} of Algorithm~\ref{LR_ADI_algorithm},  the linear systems we need to solve are of the form 
\[(A+p_{j}I)S_{j}=W_{j-1}.\]
At a first glance, having a nonconstant right-hand side does not allow for the employment of the shifted Krylov framework we briefly described above. A larger class of solvers, the so-called \emph{recycling} Krylov methods, seems more appropriate. See, e.g.,~\cite{Soodhalter2016,Parks2006,Soodhalter2014,Gaul2014,SooDK20} for general sequences of shifted linear systems, and~\cite{Ahuja2012,Ahuja2015,morFenBK13} for some recycling Krylov techniques applied in a model reduction context. However, in Section~\ref{Merging the two iterative procedures} we show that, in the LR-ADI context for $j>1$, the residual factor $W_{j-1}$ belongs to the subspace $\mathcal{K}_m$ employed in the solution of the \((j-1)\)-st linear system $(A+p_{j-1}I)S_{j-1}=W_{j-2}$. Along with the shift-invariance property of the Krylov subspace, this observation allows us to utilize only one subspace for the solution of all the shifted linear systems within the LR-ADI method. In turn, as shown in Section~\ref{Numerical examples}, we can notably reduce the computational effort of the overall procedure.

\subsection{The extended Krylov subspace method for shifted linear systems}\label{The extended Krylov subspace method for shifted linear systems} In this section, we recall the extended Krylov subspace method for shifted linear systems presented in~\cite{Simoncini2010}.

Given the sequence of shifted linear systems~\eqref{eq:shifted_linearsystems}, the extended Krylov subspace method computes a solution of the form $S_m^{(j)}=V_{m}Y_{m}^{(j)}$, where the $2mq$ orthonormal columns of $V_m$ span the extended Krylov subspace~\eqref{def:extendedKrylov}, whereas the $2mq\times q$ matrix $Y_m^{(j)}$ can be computed in different manners.

For instance, $Y_m^{(j)}$ can be computed by imposing a Galerkin condition on the residual $R_m^{(j)}=(A+p_{j}I)V_{m}Y_{m}^{(j)}-W$, namely by imposing $V_{m}^{\tran}R_{m}^{(j)}=0$. Thanks to the shifted Arnoldi relation~\eqref{eq:shiftedArnoldi}, it is easy to show that such a Galerkin condition is equivalent to solving the projected linear systems
\begin{equation}\label{eq:projected_linearsystem}
  (T_{m}+p_{j}I)Y_{m}^{(j)}=E_1\gamma,
\end{equation}
where $E_1=e_1\otimes I_{2q}$, and $\gamma\in\mathbb{R}^{2q\times q}$ is such that $W=V_1\gamma$.

With $Y_m^{(j)}$ at hand, the Frobenius norm of the residual $\|R_m^{(j)}\|_F$ can be computed at low cost, as
\begin{equation}\label{eq:residual_norm}
  \|R_{m}^{(j)}\|_F=\|E_{m+1}^{\tran}\underline{T}_{m}Y_{m}^{(j)}\|_F, 
\end{equation}
following~\cite[Equation~(3.2)]{Simoncini2010}.

Alternatively, following the discussion in~\cite[Section 4.1]{Soodhalter2016a}, the matrix $Y_{m}^{(j)}$ can be computed also by minimizing the residual norm, i.e., 
\begin{equation}\label{eq:minimalresidual}
  Y_{m}^{(j)}=\argmin_{Y\in\mathbb{R}^{2mq\times q}}\|(A+p_{j}I)V_{m}Y-W\|_{F}.
\end{equation}
Once again, thanks to the shifted Arnoldi relation~\eqref{eq:shiftedArnoldi}, the minimization problem in~\eqref{eq:minimalresidual} simplifies, and we can compute 
$Y_m^{(j)}$ as 
\begin{equation}\label{eq:projected_minimalresidual}
  Y_m^{(j)}=\argmin_{Y\in\mathbb{R}^{2mq\times q}}\|(\underline{T}_m+p_j[I_{2mq};O_{2q\times 2mq}] )Y-E_1\gamma\|_F.
\end{equation}
{\color{black} Note the abuse of notation in~\eqref{eq:projected_minimalresidual}: $E_1\in\mathbb{R}^{2(m+1)q\times 2q}$ whereas $E_1\in\mathbb{R}^{2mq\times 2q}$ in~\eqref{eq:projected_linearsystem}.
}

If $QP=\underline{T}_m+p_j[I_{2mq};O_{2q\times 2mq}]$ denotes the QR factorization of 
$\underline{T}_m+p_j[I_{2mq};O_{2q\times 2mq}]$, and we consider the following partition
\[
  Q=[Q_1,Q_2],\,Q_1\in\mathbb{R}^{2(m+1)q\times 2mq},\, Q_2\in\mathbb{R}^{2(m+1)q\times 2q},\;
  P=\begin{bmatrix}
    P_1\\
    O_{2q\times 2mq}\\
  \end{bmatrix},\, P_1\in\mathbb{R}^{2mq\times 2mq},
\] 
then the matrix $Y_{m}^{(j)}$ in~\eqref{eq:projected_minimalresidual} can be computed as
\begin{equation}\label{eq:ComputeY_MR}
  Y_m^{(j)}=P_1^{-1}Q_1^{\tran}E_1\gamma,\end{equation}
and the residual norm is given by 
\begin{equation}\label{eq:residual_norm_MR}
  \|R_m^{(j)}\|_F=\|Q_2^{\tran}E_1\gamma\|_F.
\end{equation}
The overall procedure is summarized in Algorithm~\ref{EKSM_shiftedlinearsystems}, {\color{black} where $\Sigma$ contains the indices of all yet unsolved systems, whereas $\Sigma_C$ contains the indices of all the systems that have already been solved. 
The basis block $\mathcal V_{m+1}$ can be computed by following~\cite{Simoncini2007}. This operation involves both matrix-vector products and linear system solves with $A$. Moreover,} the basis $V_m$ is real whenever $A$ and $W$ are so. Complex arithmetic may occur in the computation of $Y_m^{(j)}$, if $\text{Im}(p_j)\neq 0$.

Notice that as soon as the $j$-th linear system has converged, namely the related relative residual norm is sufficiently small, we stop solving the $j$-th projected problem\footnote{Either~\eqref{eq:projected_linearsystem} or~\eqref{eq:projected_minimalresidual}.}. Once all the linear systems have converged, we terminate the iterative process.

To conclude, we would like to point out that, to the best of our knowledge, this is the first time the minimal residual condition~\eqref{eq:projected_minimalresidual} is proposed within the extended Krylov subspace method for shifted linear systems.

\begin{algorithm}[t]
  \DontPrintSemicolon{}
  \caption{Extended Krylov subspace method for shifted linear systems.}%
  \label{EKSM_shiftedlinearsystems}
  \SetKwInOut{Input}{input}\SetKwInOut{Output}{output}
  \Input{$A\in\mathbb{R}^{n\times n}$, $\{p_j\}\subset\mathbb{C}$, $W\in\mathbb{R}^{n\times q}$, and normalized residual bound $\varepsilon>0$}
  \Output{$S_m^{(j)}\in\mathbb{R}^{n\times q}$,
    $S_m^{(j)}=V_{m}Y_{m}^{(j)}\approx S_{j}$, {\color{black} where $(A+p_{j}I)S_{j}=W_{j}$}}
  \BlankLine%
  Set $\beta=\|W\|_F$, $\Sigma=\{1,\ldots,\max j\}$, $\Sigma_C=\emptyset$\;
  Perform economy-size QR, $[W,A^{-1}W]=[\mathcal{V}_1^{(1)},\mathcal{V}_1^{(2)}][ \gamma,\theta]$, $\gamma,\theta\in\mathbb{R}^{2q\times q}$\; 
  \For{$m=1, 2,\dots$, till convergence,}{
    Compute next basis block $\mathcal{V}_{m+1}$ as in~\cite{Simoncini2007} and set $V_{m+1}=[V_{m},\mathcal{V}_{m+1}]$ \;\label{Alg.line:basis}
    Update $T_m=V_m^{\tran}AV_m$ as in~\cite{Simoncini2007}\;
    Compute $Y_m^{(j)}$ for $j\in\Sigma\diagdown\Sigma_C$ as in~\eqref{eq:projected_linearsystem} or~\eqref{eq:ComputeY_MR}\;
    Compute $\|R_m^{(j)}\|_F$ accordingly as in~\eqref{eq:residual_norm} or~\eqref{eq:residual_norm_MR}  \;

    \If{$\|R_m^{(j)}\|_F/\beta<\varepsilon$}{
      Set $\Sigma_C=\Sigma_C\cup\{j\}$\; } 
    \If{$\Sigma \diagdown\Sigma_C=\emptyset$}{
      \textbf{Break} and go to \textbf{\ref{Alg.line:last}} \;}
  }
  Set $S_m^{(j)}=V_m Y_m^{(j)}$\label{Alg.line:last}\;
\end{algorithm}

\section{Merging the two iterative procedures}\label{Merging the two iterative procedures}
In this section we show how the LR-ADI iteration and the extended Krylov subspace method for shifted linear systems can be merged together into a novel, efficient iterative procedure for the solution of~\eqref{eq:lyap}.

As already mentioned, in the sequence of shifted linear systems in
line~\ref{line_linearsolve} of Algorithm~\ref{LR_ADI_algorithm}, also the
right-hand side $W_{j-1}$ depends on the current LR-ADI iteration
$j$. Therefore, at a first glance, we seemingly have to build a new subspace at each iteration $j$, by employing the current $W_{j-1}$ as initial block. However, in the following theorem we show that $W_{j-1}$ belongs to the subspace constructed to solve the $(j-1)$-st linear system so that such a space can be used, by being possibly expanded, also in the solution of the subsequent linear system.

\begin{Theorem}\label{Th:range_W}
 Let $S_{j}=V_{m_{j}}Y_{m_{j}}$, $j\geq 1$, $\range(V_{m_{j}})=\mathbf{EK}_{m_{j}}^\square(A,B)$ for certain $m_{j}\geq 0$. Then 
 \[
   \range(W_{j})\subseteq\mathbf{EK}_{m_{j}}^\square(A,B).
 \]
\end{Theorem}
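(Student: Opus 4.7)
The plan is to prove the inclusion by induction on $j$, leaning on two elementary structural facts about the extended Krylov subspace: (i) it is nested in its index parameter, so $\mathbf{EK}_{m}^\square(A,B)\subseteq\mathbf{EK}_{m'}^\square(A,B)$ whenever $m\le m'$, and (ii) the starting block satisfies $B=W_0\in\mathbf{EK}_{m}^\square(A,B)$ for every $m\ge 1$. Since LR-ADI cumulatively expands the approximation space across its iterations, we also have $m_{j-1}\le m_j$, so the inductive hypothesis for $W_{j-1}$ will automatically upgrade to an inclusion in the larger space $\mathbf{EK}_{m_j}^\square(A,B)$. I will also exploit that $V_{m_j}$ is a \emph{real} matrix (constructed by the extended Arnoldi process of~\cite{Simoncini2007} from the real data $A$ and $B$), so that whenever $S_j=V_{m_j}Y_{m_j}$ has complex entries, both $\operatorname{Re}(S_j)$ and $\operatorname{Im}(S_j)$ still lie in the real subspace $\range(V_{m_j})=\mathbf{EK}_{m_j}^\square(A,B)$.

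For the base case $j=1$, Algorithm~\ref{LR_ADI_algorithm} yields $W_1=B-2\operatorname{Re}(p_1)S_1$. Both $B$ and $S_1$ lie in $\mathbf{EK}_{m_1}^\square(A,B)$ by (ii) and the hypothesis of the theorem, respectively, so closure under linear combinations gives the claim. For the inductive step I would split on the type of shift processed. In the real-shift branch, the update $W_j=W_{j-1}-2\operatorname{Re}(p_j)S_j$ is a linear combination of $W_{j-1}\in\mathbf{EK}_{m_{j-1}}^\square(A,B)\subseteq\mathbf{EK}_{m_j}^\square(A,B)$ (induction plus nesting) and $S_j\in\mathbf{EK}_{m_j}^\square(A,B)$ (hypothesis), and the conclusion is immediate.

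In the complex-conjugate-pair branch, the effective real residual after processing the pair $(p_{j-1},p_j)$ with $p_j=\widebar{p_{j-1}}$ is
\[
W_j \;=\; W_{j-2}\;-\;4\operatorname{Re}(p_{j-1})\bigl(\operatorname{Re}(S_{j-1})+\beta\operatorname{Im}(S_{j-1})\bigr),\qquad \beta=\operatorname{Re}(p_{j-1})/\operatorname{Im}(p_{j-1}).
\]
By the inductive hypothesis and nesting, $W_{j-2}\in\mathbf{EK}_{m_{j-2}}^\square(A,B)\subseteq\mathbf{EK}_{m_j}^\square(A,B)$, while the reality of $V_{m_{j-1}}$ forces $\operatorname{Re}(S_{j-1}),\operatorname{Im}(S_{j-1})\in\range(V_{m_{j-1}})=\mathbf{EK}_{m_{j-1}}^\square(A,B)\subseteq\mathbf{EK}_{m_j}^\square(A,B)$. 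Hence the whole right-hand side lies in $\mathbf{EK}_{m_j}^\square(A,B)$.

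The only genuine obstacle is bookkeeping around the complex-shift branch of Algorithm~\ref{LR_ADI_algorithm}: one must identify which $W_j$ plays the role of the actual residual fed to the next iteration and to the stopping criterion, and verify that the reality of $V_{m_{j-1}}$ is enough to keep the real and imaginary parts of the complex $S_{j-1}$ inside the same real space. Once that is in place, the argument collapses to the triviality that a (real) linear subspace is closed under (real) linear combinations, combined with the monotonicity of the extended Krylov subspaces in~$m$.
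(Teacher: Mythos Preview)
Your argument is correct and essentially coincides with the paper's: both proceed by induction on $j$, using $B\in\mathbf{EK}_{m_1}^\square(A,B)$ for the base case and the nesting $\mathbf{EK}_{m_{j-1}}^\square(A,B)\subseteq\mathbf{EK}_{m_j}^\square(A,B)$ (in the paper expressed via zero-padding $[\Upsilon_{j-1};O]$) together with $S_j=V_{m_j}Y_{m_j}$ for the step. The only cosmetic difference is that the paper's proof treats just the update $W_j=W_{j-1}-2\operatorname{Re}(p_j)S_j$ and relegates the complex-pair formula $W_{j+1}=W_{j-1}-4\operatorname{Re}(p_j)(\operatorname{Re}(S_j)+\beta\operatorname{Im}(S_j))$ to a subsequent remark (using, exactly as you do, that $V_{m_j}$ is real), whereas you fold both branches into the induction itself.
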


\begin{proof}
 We are going to show the statement by induction on $j$.

The first linear system to be solved within the LR-ADI method is $(A+p_1I)S_1=B$ and the extended Krylov subspace $\mathbf{EK}_{m_1}^\square(A,B)$ can be employed to this end. The computed solution is of the form $S_1=V_{m_1}Y_{m_1}$, $m_1>0$, where $\range(V_{m_1})=\mathbf{EK}_{m_1}^\square(A,B)$ and $Y_{m_1}\in\mathbb{C}^{2m_1q\times q}$. It is thus easy to show that $W_1=B-2\text{Re}(p_1)S_1=V_{m_1}(E_1\gamma -2\text{Re}(p_1)Y_{m_1})$ is such that $\range(W_1)\subseteq\mathbf{EK}_{m_1}^\square(A,B)$.

We now assume the statement holds for a certain $j-1\geq1$, and we show it holds for $j$ as well.
Since $S_j=V_{m_j}Y_{m_j}$ by assumption and $\range(W_{j-1})\subseteq\mathbf{EK}_{m_{j-1}}^\square(A,B)$ by inductive hypothesis, namely we can write $W_{j-1}=V_{m_{j-1}}\Upsilon_{j-1}$ for a certain $\Upsilon_{j-1}\in\mathbb{R}^{2m_{j-1}q\times q}$, we have 
\begin{eqnarray*}
 W_j=&W_{j-1}-2\text{Re}(p_j)S_j=V_{m_{j-1}}\Upsilon_{j-1}-2\text{Re}(p_j)V_{m_j}Y_{m_j}\\
 =&V_{m_j}\left([\Upsilon_{j-1};O_{2(m_j-m_{j-1})q\times q}]-2\text{Re}(p_j)Y_{m_j}\right).
\end{eqnarray*}
Therefore, $\range(W_{j})\subseteq\mathbf{EK}_{m_{j}}^\square(A,B)$.
\end{proof}
Theorem~\ref{Th:range_W} shows that $W_j$ is exactly represented in $\mathbf{EK}_{m_{j}}^\square(A,B)$. This means that the latter subspace can be still employed for the computation of $S_{j+1}$ by being possibly expanded. Indeed, no components of $W_{j}$ are annihilated when either the Galerkin or the minimal residual condition is imposed. In the following corollary we show how to easily write down the projected problems~\eqref{eq:projected_linearsystem} and~\eqref{eq:projected_minimalresidual} along with the corresponding residual norm computation.
\begin{Cor}
 Assume the prerequisites of Theorem~\ref{Th:range_W} hold. If a Galerkin condition is imposed for the computation of $S_j=V_{m_j}Y_{m_j}$, then the matrix $Y_{m_j}$ amounts to the solution of the projected linear system
\begin{equation}\label{eq:projected_linearsystem_general}
  (T_{m_j}+p_{j}I_{2m_{j}q})Y_{m_{j}}= [\Upsilon_{j-1};O_{2(m_j-m_{j-1})q\times q}],
\end{equation}
where $\Upsilon_{j-1}\in\mathbb{R}^{2m_{j-1}q\times q}$
 is such that $W_{j-1}=V_{m_{j-1}}\Upsilon_{j-1}$, $m_{j-1}\leq m_j$.
The related residual norm can be computed by 
\begin{equation}\label{eq:res_projectedgeneral}
  \|R_{m_j}\|_F=\|E_{m_j+1}^{\tran}\underline{T}_{m_j}Y_{m_j}\|_F.
\end{equation}
Similarly, if a minimal residual norm condition is imposed, we have
\begin{equation}\label{eq:projected_minimalresidual_general}
  Y_{m_j}=\argmin_{Y\in\mathbb{C}^{2m_{j}q\times q}}\|(\underline{T}_{m_{j}}+p_j[I_{2m_{j}q};O_{2q\times 2m_{j}q}] )Y-[\Upsilon_{j-1};O_{2(m_{j}-m_{j-1}+1)q\times q}]\|_F,
\end{equation}
so that
\begin{equation}\label{eq:res_projectedgeneral2}
  \|R_ {m_j}\|_F=\|Q_2^{\tran}[\Upsilon_{j-1};O_{2(m_j-m_{j-1}+1)q\times q}]\|_F,
\end{equation}
where the $2q$ orthonormal columns of $Q_2$ are a basis of the kernel of $\underline{T}_{m_{j}}+p_{j}[I_{2m_{j}q};O_{2q\times 2m_{j}q}]$.
\end{Cor}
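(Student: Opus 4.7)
The plan is to repeat the derivation in Section~\ref{The extended Krylov subspace method for shifted linear systems} but using the ``inherited'' right-hand side representation $W_{j-1}=V_{m_{j-1}}\Upsilon_{j-1}$ supplied by Theorem~\ref{Th:range_W}, rather than $W=V_1\gamma$. The only structural difference is that $\Upsilon_{j-1}$ sits in the first $2m_{j-1}q$ rows of a vector of height $2m_jq$ (or $2(m_j+1)q$), which forces the zero-padding appearing in the statement.

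First I would write down the shifted Arnoldi identity~\eqref{eq:shiftedArnoldi} for the current space, namely
\begin{equation*}
(A+p_jI_n)V_{m_j}=V_{m_j}(T_{m_j}+p_jI_{2m_jq})+\mathcal{V}_{m_j+1}E_{m_j+1}^{\tran}\underline{T}_{m_j}=V_{m_j+1}\bigl(\underline{T}_{m_j}+p_j[I_{2m_jq};O_{2q\times 2m_jq}]\bigr).
\end{equation*}
Since $V_{m_{j-1}}$ consists of the first $2m_{j-1}q$ columns of $V_{m_j}$ by the nested structure of $\mathbf{EK}_m^\square(A,B)$, the inductive representation from Theorem~\ref{Th:range_W} can be rewritten as $W_{j-1}=V_{m_j}[\Upsilon_{j-1};O_{2(m_j-m_{j-1})q\times q}]=V_{m_j+1}[\Upsilon_{j-1};O_{2(m_j-m_{j-1}+1)q\times q}]$. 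Plugging in an ansatz $S_j=V_{m_j}Y_{m_j}$ and using the above Arnoldi identity yields the residual in projected form.

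For the Galerkin case, I would impose $V_{m_j}^{\tran}R_{m_j}=0$; the second term of the shifted Arnoldi identity is annihilated because $V_{m_j}^{\tran}\mathcal{V}_{m_j+1}=O$, and the right-hand side collapses to $[\Upsilon_{j-1};O]$, producing~\eqref{eq:projected_linearsystem_general}. The residual norm~\eqref{eq:res_projectedgeneral} then follows by noting that the remaining residual equals $\mathcal{V}_{m_j+1}E_{m_j+1}^{\tran}\underline{T}_{m_j}Y_{m_j}$ and using the orthonormality of $\mathcal{V}_{m_j+1}$, exactly as in~\eqref{eq:residual_norm}. For the minimal residual case, the isometry of $V_{m_j+1}$ reduces $\|R_{m_j}\|_F$ to $\|(\underline{T}_{m_j}+p_j[I_{2m_jq};O_{2q\times 2m_jq}])Y-[\Upsilon_{j-1};O_{2(m_j-m_{j-1}+1)q\times q}]\|_F$, which is~\eqref{eq:projected_minimalresidual_general}; taking the QR factorization of $\underline{T}_{m_j}+p_j[I_{2m_jq};O_{2q\times 2m_jq}]$ and arguing as for~\eqref{eq:residual_norm_MR} delivers~\eqref{eq:res_projectedgeneral2}.

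The only real obstacle is bookkeeping the two different zero-paddings (height $2(m_j-m_{j-1})q$ in the Galerkin formulation versus height $2(m_j-m_{j-1}+1)q$ in the minimal-residual one, which arises because the QR factorization lives in the $(m_j+1)$-st extended block). I would be explicit about this mismatch when going from $V_{m_j}$ to $V_{m_j+1}$ so that the dimensions in~\eqref{eq:projected_minimalresidual_general}--\eqref{eq:res_projectedgeneral2} are consistent. Everything else is an essentially verbatim repetition of~\eqref{eq:projected_linearsystem}--\eqref{eq:residual_norm_MR} with $E_1\gamma$ replaced by the suitably padded $\Upsilon_{j-1}$.
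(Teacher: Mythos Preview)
Your proposal is correct and follows essentially the same route as the paper: write the residual $R_{m_j}$ using the shifted Arnoldi relation and the padded representation $W_{j-1}=V_{m_j}[\Upsilon_{j-1};O]$, then read off the Galerkin and minimal-residual conditions from the two equivalent expressions $R_{m_j}=V_{m_j}(\cdots)+\mathcal{V}_{m_j+1}E_{m_j+1}^{\tran}\underline{T}_{m_j}Y_{m_j}=V_{m_j+1}(\cdots)$. Your explicit mention of the orthogonality $V_{m_j}^{\tran}\mathcal{V}_{m_j+1}=O$ and the isometry of $V_{m_j+1}$, and your care about the two different zero-paddings, are exactly the ingredients the paper uses (albeit more tersely).
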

\begin{proof}
 Since $W_{j-1}=V_{m_{j-1}}\Upsilon_{j-1}$ and we look for a solution $S_j=V_{m_{j}}Y_{m_j}$ to $(A+p_{j}I)S_{j}=W_{j-1}$, we can write
 \[
 \begin{array}{rcl}
   R_{m_j}&=&(A+p_{j}I)S_j-W_{j-1}=
 (A+p_{j}I)V_{m_{j}}Y_{m_j}-V_{m_{j-1}}\Upsilon_{j-1}\\
 &=&
 V_{m_j}\left((T_{m_j}+p_{j}I_{2m_{j}q})Y_{m_{j}}- [\Upsilon_{j-1};O_{2(m_j-m_{j-1})q\times q}]\right)+\mathcal{V}_{m_j+1}E_{m_j+1}^{\tran}\underline{T}_{m_j}\\
 &=&V_{m_j+1}\left((\underline{T}_{m_j}+p_{j}
 [I_{2m_{j}q};O_{2q\times 2m_{j}q}] 
 )Y_{m_{j}}- [\Upsilon_{j-1};O_{2(m_j-m_{j-1}+1)q\times q}]\right).
 \end{array}
\]
 If a Galerkin condition is imposed, namely $V_{m_j}^{\tran}R_{m_j}=0$,
 then $Y_{m_j}$ is the solution of the linear system in~\eqref{eq:projected_linearsystem_general} and the related residual norm $\|R_{m_j}\|_F$ can be computed as in~\eqref{eq:res_projectedgeneral}.
 
 Similarly, if a minimal residual condition is imposed, $Y_{m_j}$ solves the minimization problem~\eqref{eq:projected_minimalresidual_general} and the $\|R_{m_j}\|_F$ fulfills~\eqref{eq:res_projectedgeneral2}.
\end{proof}
%
%
%
%
%
%

Once $S_j=V_{m_j}Y_{m_j}$ is computed, namely the related residual norm $\|R_{m_j}\|_F$ is sufficiently small, we proceed with the remaining LR-ADI operations.

We would like to point out that the expression of $W_j$, i.e., $W_j=V_{m_j}\Upsilon_{j}$, can be exploited for the Lyapunov residual norm as well. Indeed, 
\begin{equation}\label{eq:lyap_residual_new}
  \|W_j^*W_j\|_F=\|\Upsilon_{j}^*\Upsilon_{j}\|_F.
\end{equation}
This means that also the computation of the Lyapunov residual norm can be carried out by manipulating small matrices of dimension $2m_{j}q\times q$. Similarly, the solution $Z_j$ can be assembled at the very end of the LR-ADI procedure once the residual norm in~\eqref{eq:lyap_residual_new} is sufficiently small. Indeed,
\begin{equation}
 \label{eq:lowrankfactor_sol}
 \begin{array}{rcl}
Z_j&=&[Z_{j-1},\sqrt{-2\text{Re}(p_j)}S_j]=[\sqrt{-2\text{Re}(p_1)}S_1,\sqrt{-2\text{Re}(p_2)}S_2,\ldots,\sqrt{-2\text{Re}(p_j)}S_j]\\
&&\\
  &=&[\sqrt{-2\text{Re}(p_1)}V_{m_1}Y_{m_1},\sqrt{-2\text{Re}(p_2)}V_{m_2}Y_{m_2},\ldots,\sqrt{-2\text{Re}(p_j)}V_{m_j}Y_{m_j}]\\
  &&\\
  &=&V_{m_j}[[Y_{m_1};O_{2(m_j-m_1)q\times q}],[Y_{m_2};O_{2(m_j-m_2)q\times q}],\ldots,Y_{m_j}]\\
  &&\cdot(\sqrt{-2\text{diag}(\text{Re}(p_1),\ldots,\text{Re}(p_j))}\otimes I_q). \end{array}
\end{equation}
The overall procedure combining the LR-ADI iteration with the extended Krylov subspace method for shifted linear systems is depicted in Algorithm~\ref{LR_ADI_EKSM}\footnote{Many subscripts have been removed to make the algorithm more readable.}.

\begin{algorithm}[t]
  \DontPrintSemicolon{}
  \caption{LR-ADI-EKSM for Lyapunov equations.}%
  \label{LR_ADI_EKSM}
  \SetKwInOut{Input}{input}\SetKwInOut{Output}{output}
  \Input{$A\in\mathbb{R}^{n\times n}$, $A$ stable, $B\in\mathbb{R}^{n\times q}$, max.\ inner and outer iteration count $m_{\max}$, $j_{\max}$, normalized residual bound $\varepsilon_{\mathtt{out}}>0$}
  \Output{$Z_{j}\in\mathbb{R}^{n\times jq}$, $Z_{j}Z_{j}^{\tran}\approx X$ approximate solution to~\eqref{eq:lyap}.}
  \BlankLine{}
  Set $\nu=\|B^*B\|_F$, $m_0=0$, $Y_0=O$, and select $p_1\in\mathbb{C}_-$\;
  Perform economy-size QR, $[B,A^{-1}B]=[\mathcal{V}_1^{(1)},\mathcal{V}_1^{(2)}][ \gamma,\theta]$, $\gamma,\theta\in\mathbb{R}^{2q\times q}$\label{basis_construction_1}\; 
  Set $\Upsilon_0=E_1\gamma$, $m=j=1$, and select $\varepsilon_{\mathtt{inn}}^{(1)}$\;
  \While{$m\leq m_{\max}$ {\rm\textbf{and}} $j\leq j_{\max}$}{
    Compute next basis block $\mathcal{V}_{m+1}$ as in~\cite{Simoncini2007} and set $V_{m+1}=[V_{m},\mathcal{V}_{m+1}]$ \;\label{Alg.line:basis2}
    Update $T_m=V_m^{\tran}AV_m$ as in~\cite{Simoncini2007}\label{basis_construction_3}\;
    Compute $Y_{m}$ as in~\eqref{eq:projected_linearsystem_general} or~\eqref{eq:projected_minimalresidual_general}\label{projected_problem_1}
    \;
    Compute $\|R_m\|_F$ accordingly as in~\eqref{eq:res_projectedgeneral} or~\eqref{eq:res_projectedgeneral2}\label{projected_problem_2}\;

    \If{$\|R_m\|_F\leq \varepsilon_{\mathtt{inn}}^{(j)}$}{
      $\mathtt{flag\_noexpand}=1$\;
      \While{$\mathtt{flag\_noexpand}$}{
        Set $m_j=m$ and $Y_{m_j}=Y_m$ \;
        Set $\Upsilon_j=\Upsilon_{j-1}-2\text{Re}(p_j)Y_{m_j}$\;
        \If{$\text{{\rm Im}}(p_j)\neq 0$}{
          Set $m_{j+1}=m_j$, $\beta=\text{Re}(p_j)/\text{Im}(p_j)$, and $p_{j+1}=\widebar p_j$\label{line:complexpj1}\;
          Set $Y_{m_{j+1}}=\sqrt{2(\beta^2+1)}\text{Im}(Y_{m_j})$ \;
          Set $\Upsilon_{j+1}=\Upsilon_{j-1}-4\text{Re}(p_j)(\text{Re}(Y_{m_j})+\beta \text{Im}(Y_{m_j}))$\;
          Set $Y_{m_j}=\sqrt{2}(\text{Re}(Y_{m_j})+\beta \text{Im}(Y_{m_j}))$\;
          Set $j=j+1$\label{line:complexpjend}\;
        }
        \If{$\|\Upsilon_j^*\Upsilon_j\|_F\leq\nu\cdot\varepsilon_{\mathtt{out}}$}{
          \textbf{Break} and go to \textbf{\ref{Alg.line:last2}} \;}
        Choose the next shift $p_{j+1}\in\mathbb{C}_-$\label{shift_computation_1}\;
        Set $j=j+1$\label{line_flag1}\;
        Compute $Y_{m}$ as in~\eqref{eq:projected_linearsystem_general} or~\eqref{eq:projected_minimalresidual_general}\label{projected_problem_1bis}
        \;
        Compute $\|R_m\|_F$ accordingly as in~\eqref{eq:res_projectedgeneral} or~\eqref{eq:res_projectedgeneral2}\label{projected_problem_2bis}\;
        Select $\varepsilon_{\mathtt{inn}}^{(j)}$\;
        \If{$\|R_m\|_F\geq \varepsilon_{\mathtt{inn}}^{(j)}$}{
          $\mathtt{flag\_noexpand}=0$\label{line_flag2}\;
        }
      }
    }
    Set $m=m+1$\;
  } 
  
  {\small$Z_j=V_m [[Y_{m_1};O_{2(m_j-m_1)q\times q}],[Y_{m_2};O_{2(m_j-m_2)q\times q}],\ldots,Y_{m_j}](\sqrt{-2\text{diag}(\text{Re}(p_1),\ldots,\text{Re}(p_j))}\otimes I_q)$}\label{Alg.line:last2}
\end{algorithm}

As in Algorithm~\ref{LR_ADI_algorithm}, if $\text{Im}(p_j)\neq 0$, in  lines~\ref{line:complexpj1} to~\ref{line:complexpjend} we set $p_{j+1}=\widebar p_j$, and we follow the implementation suggested in~\cite{Benner2013b, Kuerschner2016} to reduce the amount of complex arithmetic. In particular, $Y_{m_{j+1}}$ can be obtained from $Y_{m_{j}}$ without solving~\eqref{eq:projected_linearsystem_general} or~\eqref{eq:projected_minimalresidual_general}. Moreover, the adopted scheme results in a real $Z_j$. See~\cite{Benner2013b} and~\cite[Algorithm 4.3]{Kuerschner2016} for further details.
{\color{black}
\begin{remark}
  Theorem~\ref{Th:range_W} shows that $\text{Range}(W_j)\subseteq\mathbf{EK}_{m_j}^\square(A,B)$ whenever $W_j$ is updated as $W_j=W_{j-1}-2\text{Re}(p_j)S_j$, namely whenever all the employed shifts are real. In case of shifts with nonzero imaginary part, the LR-ADI implementation we adopt sets
  \[
    W_{j+1}=W_{j-1}-4\text{Re}(p_j)(\text{Re}(S_j)+\beta\text{Im}(S_j)).
  \]
  Therefore, we need to show that $W_{j+1}$ defined as above is still such that $\text{Range}(W_{j+1})\subseteq\mathbf{EK}_{m_j}^\square(A,B)$. This can be done by applying the same exact arguments used in the proof of Theorem~\ref{Th:range_W}. In particular, the result follows by noticing that the basis $V_m$ is real, as we assumed $A$ and $B$ to be real matrices, and that we can write 
  \begin{align*}
  W_{j+1}=&W_{j-1}-4\text{Re}(p_j)(\text{Re}(S_j)+\beta\text{Im}(S_j))\\
  =&V_{m_j}\left([\Upsilon_{j-1};O_{2(m_j-m_{j-1})q\times q}]-4\text{Re}(p_j)(\text{Re}(Y_{m_j})+\beta\text{Im}(Y_{m_j})\right).
  \end{align*}
 \end{remark}
 }

Notice that two tolerances $\varepsilon_{\mathtt{inn}}^{(j)}$, and $\varepsilon_{\mathtt{out}}$ are employed in Algorithm~\ref{LR_ADI_EKSM}. In particular, $\varepsilon_{\mathtt{out}}$ is used to assess the accuracy of the computed solution in terms of the Lyapunov residual norm, whereas $\varepsilon_{\mathtt{inn}}^{(j)}$ is employed to determine whether the solution of the current linear system is sufficiently correct. In principle, the user can provide a fixed value for the inner tolerance, i.e.\ $\varepsilon_{\mathtt{inn}}^{(j)}\equiv\widebar \varepsilon_{\mathtt{inn}}$ for all $j$.
However, the theory developed in~\cite{Kuerschner2018} can be used to adaptively compute $\varepsilon_{\mathtt{inn}}^{(j)}$ as the LR-ADI iterations proceed. The \emph{relaxation} strategy presented in~\cite[Section 3]{Kuerschner2018} allows us to increase $\varepsilon_{\mathtt{inn}}^{(j)}$ as $j$ grows. Therefore, especially when $\varepsilon_{\mathtt{inn}}^{(j)}$ is rather large, there is no need to expand the current extended Krylov subspace in general. In all the results reported in Section~\ref{Numerical examples}, we employ such a strategy and $\varepsilon_{\mathtt{inn}}^{(j)}$ is computed according to~\cite[Equation (3.18b)]{Kuerschner2018}. See also~\cite{Liu2020} for similar results in case of Sylvester equations.

We would like to point out that the lines~\ref{line_flag1} to~\ref{line_flag2} in Algorithm~\ref{LR_ADI_EKSM} and the use of the flag {\tt flag\_noexpand} are crucial to reduce the computational cost of the overall procedure. Indeed, those lines are devoted to check whether the current subspace already contains enough spectral information to solve the current linear system. If this is the case, we do not expand the current space avoiding unnecessary increments in the memory requirements and computational efforts.

If $\mathbf{Y}:= [[Y_{m_1};O_{2(m_j-m_1)q\times q}],[Y_{m_2};O_{2(m_2-m_1)q\times q}],\ldots,Y_{m_j}]$,~\eqref{eq:lowrankfactor_sol} shows that the numerical solution computed by the proposed LR-ADI implementation is of the form 
\begin{equation}\label{eq:lowrank_sol_ADI}
  Z_{j}Z_{j}^{\tran}=-2V_{m_{j}}(\mathbf{Y}(\text{diag}(\text{Re}(p_{1}),\ldots,\text{Re}(p_{j}))\otimes I_{q}) \mathbf{Y}^{\tran})V_{m_{j}}^{\tran}.
\end{equation}
The right-hand side in~\eqref{eq:lowrank_sol_ADI} has the typical form of an approximate solution computed by a projection method applied to~\eqref{eq:lyap}. In particular, if the extended Krylov subspace method (K-PIK) presented in~\cite{Simoncini2007} is applied to {\color{black} solve}~\eqref{eq:lyap}, the computed approximation is of the form $X_{m}=V_{m}{\color{black}L_{m}}V_{m}^{\tran}$, where the orthonormal columns of $V_{m}$ are a basis of $\mathbf{EK}_{m}^\square(A,B)$ and $L_{m}$ is computed by imposing a Galerkin condition on the residual matrix $AV_{m}{\color{black}L_{m}}V_{m}^{\tran}+V_{m}{\color{black}L_{m}}V_{m}^{\tran}A^{\tran}+BB^{\tran}$. 
Therefore, the proposed LR-ADI implementation can be seen as a novel projection method where the coefficients of the linear combination in terms of the basis vectors that provides the approximate solution, namely the matrix $\mathbf{Y}(\text{diag}(\text{Re}(p_1),\ldots,\text{Re}(p_j))\otimes I_q) \mathbf{Y}^{\tran}$, is computed as outlined above and not by imposing a Galerkin condition on the residual matrix. This perspective may provide new insights on the relation between LR-ADI and K-PIK\@. However, this is beyond the scope of this paper. Similar investigations, relating LR-ADI and rational Krylov subspace methods have been reported in~\cite{Druskin2011,morWol15,morWolP16}.

The expression~\eqref{eq:lowrank_sol_ADI} resembles the $LDL^{\tran}$-form of
the LR-ADI solution. This formulation, while being more natural for projection-based solvers, also turned out to be advantageous when LR-ADI is employed as linear solver for differential matrix equations; see~\cite{LanMS15}.


\section{Shift computation}\label{Shift computation}
Many of the procedures, available in the literature, for the ADI shift computation need the explicit construction of a basis of $\range(Z_j)$ or a subspace thereof.
For instance, in~\cite{Benner2014-2015} the authors suggest to use, as shifts $p_j$, a subset of the Ritz values of $A$ with respect to $\mathcal{Z}_j=\range(\widetilde Z_j)$, where $\widetilde Z_j\in\mathbb{R}^{n\times h}$ consists of the last $h>0$ columns of $Z_j$ that have been orthogonalized with respect to each other. However,~\eqref{eq:lowrankfactor_sol} shows that Algorithm~\ref{LR_ADI_EKSM} provides us with a matrix $Z_j$ such that $\range(Z_j)\subseteq\mathbf{EK}_{m_j}^\square(A,B)$ so that the Ritz values of $A$ with respect to $\mathbf{EK}_{m_j}^\square(A,B)$ can be employed as shifts. Moreover, in standard LR-ADI implementations, one has to explicitly compute the projection of $A$ onto $\mathcal{Z}_j$ increasing the computational efforts of the overall procedure. 
In our approach, the projection of $A$ onto $\mathbf{EK}_{m_j}^\square(A,B)$
is given for free as this amounts to $T_{m_j}$ and no additional operations are required. 

The observation above can be applied to many schemes for the shift computation. In the following we give some details for the residual-Hamiltonian-based shifts and the residual norm-minimizing shifts presented in~\cite{Kuerschner2019}.

In~\cite[Section 2.1.3]{Kuerschner2019}, at the $j$-th LR-ADI iteration, the Hamiltonian matrix $\mathcal{H}_j=\begin{bmatrix}
  A^{\tran} & O \\
  W_{j}W_{j}^{\tran} & -A\\                                                                                                              \end{bmatrix}$ is considered and its projection onto $\mathcal{Z}_{j}$, namely  $ \mathcal{\widetilde H}_j=\begin{bmatrix}
  {(\widetilde Z_{j}^{\tran}A\widetilde Z_{j})}^{\tran} & O \\
  \widetilde Z_{j}^{\tran}W_{j}W_{j}^{\tran}\widetilde Z_{j} & -\widetilde Z_{j}^{\tran}A\widetilde Z_{j}\\                                                                                                              \end{bmatrix}$, is constructed. In our case, we can easily construct the projection of $\mathcal{H}_j$ onto $\mathbf{EK}_{m_j}^\square(A,B)$ and this is given by 
\begin{equation}\label{eq:Hamiltonian_projected}
  \mathcal{\widetilde H}_j=\begin{bmatrix}
    T_{m_{j}}^{\tran} & O \\
    \Upsilon_j^{\tran}\Upsilon_j & -T_{m_{j}}\\                                                                                                              \end{bmatrix}\in\mathbb{R}^{4m_{j}q\times 4m_{j}q}.
\end{equation}
With~\eqref{eq:Hamiltonian_projected} at hand, we compute its stable eigenpairs $\left(\lambda_k,\begin{bmatrix}s_k\\
    t_k\\
  \end{bmatrix}\right)
$, $\text{Re}(\lambda_k)<0$, $s_k,t_k\in\mathbb{R}^{2m_{j}q}$, and {\color{black} the $(j+1)$-st residual-Hamiltonian-based shift $p_{j+1}$ is selected as} the eigenvalue $\lambda_{\widehat k}$ such that $t_{\widehat k}=\argmax\{\|t_k\|\}$. 

For the computation of residual-norm-minimizing shifts, in~\cite[Section 3]{Kuerschner2019} a rather involved optimization procedure is presented. In particular, the real and imaginary parts of $p_{j+1}=\theta_{j+1}+\imath\xi_{j+1}$ are computed by solving the following minimization problem
\begin{equation}\label{eq:minimization_problem_shifts}
  [\theta_{j+1},\xi_{j+1}]=\argmin_{\theta\in\mathbb{R}_-,\xi\in\mathbb{R}}\|W_j-2\theta({(A+(\theta+\imath\xi)I)}^{-1})W_j\|^2.
\end{equation}
The objective function in~\eqref{eq:minimization_problem_shifts} is expensive to evaluate, making the shift computation often more expensive than a single LR-ADI iteration. To overcome this issue, K\"{u}rschner proposes to employ smaller matrices $\widetilde A$ and 
$\widetilde W_j$ in place of $A$ and $W_j$. Once again, $\widetilde A$ and 
$\widetilde W_j$ are the projection of $A$ and $W_j$ onto a suitable subspace. This subspace is chosen to be $\mathbf{EK}_\ell^\square(A,B)\cup\range(Z_j)$ for a certain, usually small, $\ell>0$. 
In our implementation, $\mathbf{EK}_\ell^\square(A,B)\cup\range(Z_j)\subseteq \mathbf{EK}_{m_j}^\square(A,B)$ if $\ell\leq m_j$. Therefore, we can set $\widetilde A=T_{m_j}$ and $\widetilde W_j=\Upsilon_j$ for the approximation of $[\theta_{j+1},\xi_{j+1}]$~\eqref{eq:minimization_problem_shifts}.

\section{Numerical examples}\label{Numerical examples}

In this section we illustrate the potential of the scheme we propose in this paper. The two variants of the LR-ADI-EKSM method, we have illustrated in Section~\ref{Merging the two iterative procedures}, will be denoted by LR-ADI-EKSM(G) and LR-ADI-EKSM(MR). In particular, in LR-ADI-EKSM(G) we solve the linear systems by imposing a Galerkin condition, i.e., the matrix $Y$ is computed by solving the reduced problem~\eqref{eq:projected_linearsystem_general}. In LR-ADI-EKSM(MR), $Y$ solves the least squares problem~\eqref{eq:projected_minimalresidual_general}. 

We test Algorithm~\ref{LR_ADI_EKSM}
on different instances of~\eqref{eq:lyap} coming from the discretization of certain PDEs, and we study how the computational cost of the main steps of  
Algorithm~\ref{LR_ADI_EKSM} depends on the problem dimension $n$ and rank of the
right hand side $q$. 

The results achieved by Algorithm~\ref{LR_ADI_EKSM} are also compared to the ones obtained by running a standard implementation of the LR-ADI method. In particular, we employed the \MATLAB{} function {\tt mess\_lradi} available in the M-M.E.S.S. package~\cite{SaaKB22-mmess-2.2}. Notice that {\tt mess\_lradi} is intended to be a \emph{black-box} routine so that many checks and inspections are performed before the actual solution process starts. This may increase the overall running time of {\tt mess\_lradi}. Therefore, to have fair comparisons, we also report the results obtained by running a standard implementation of LR-ADI where the overhead cost mentioned above is not present. Such a routine is simply denoted by {\tt lradi} in the tables that follow. 

For a better understanding, in Table~\ref{tab0} we summarize the adopted linear system solver included in the tested routines for each of the numerical experiments that follow. Similarly, in Table~\ref{tab0} we indicate whether a given scheme is equipped with the relaxation strategy coming from~\cite{Kuerschner2018} {\color{black} for the selection of $\varepsilon_{\mathtt{inn}}$}.
  \begin{table}[t]
    \centering
    \begin{tabular}{rrrr}
      & & \textsc{Solver} & \textsc{Relaxation} \\
      \hline
    \multirow{3}{*}{\textbf{Experiment 1}}    
   & LR-ADI-EKSM(G) & backslash & \cmark\\
   &{\tt lradi} & backslash & \xmark\\
   &{\tt mess\_lradi} & backslash & \xmark\\
   \hline
    \multirow{3}{*}{\textbf{Experiment 2}}    
   & LR-ADI-EKSM(MR) & PGMRES & \cmark\\
   &{\tt lradi} & PGMRES & \cmark\\
   &{\tt mess\_lradi} & PGMRES & \xmark\\
   \hline
      \multirow{3}{*}{\textbf{Experiment 3}}    
   & LR-ADI-EKSM(G) & backslash & \cmark\\
   &{\tt mess\_lradi} & backslash & \xmark\\
   & K-PIK & backslash & --- \\
   \hline
      \multirow{3}{*}{\textbf{Experiment 4}}    
   & LR-ADI-EKSM(G) & backslash & \cmark\\
   &{\tt mess\_lradi} & backslash & \xmark\\
    \end{tabular}
    \caption{\textsc{Solver}: solver employed for solving the linear systems
      with $A$ in LR-ADI-EKSM and K-PIK and with $A+p_{j}I$ in {\tt lradi} and
      {\tt mess\_lradi}. In the column \textsc{Relaxation} we indicate whether a
      certain scheme is equipped with the relaxation strategy proposed
      in~\cite{Kuerschner2018}.}%
    \label{tab0}
  \end{table}

For all experiments, the tolerance $\varepsilon_{\mathtt{out}}$ for the relative residual norm is set to $10^{-8}$. Moreover, except for Experiment~\ref{Ex.3}, we always employ the residual-Hamiltonian-based shifts presented in~\cite{Kuerschner2019} and computed as illustrated in Section~\ref{Shift computation}. 

All results were obtained by running \MATLAB{} R2020b~\cite{MATLAB} on a 
standard node\footnote{CPU\@: 2x Intel Xeon Skylake Silver 4110 @ 2.1 GHz, 8 cores per CPU\@. RAM\@: 192 GB DDR4 ECC.} of the Linux cluster \texttt{mechthild} hosted at the Max Planck Institute for Dynamics of Complex Technical Systems in Magdeburg, Germany.

\begin{example}\label{Ex.1}
  In the first experiment we consider a Lyapunov equation where 
  \[A=I_h\otimes D_h+D_h\otimes I_h,\quad D_h=\text{tridiag}(1,-2,1)\in\mathbb{R}^{h\times h}.\]
  Therefore, $A\in\mathbb{R}^{n\times n}$, $n=h^2$, is symmetric and stable. 
  We first consider a matrix $B\in\mathbb{R}^{n\times q}$ with random entries and unit norm, and 
  in Table~\ref{tab1.ex1} we depict how the overall solution time distributes among the main steps of our algorithm for different values of $n$ and $q$. 
  
  In both LR-ADI-EKSM(G) and LR-ADI-EKSM(MR), the linear systems with $A$ required for the basis construction are solved by means of the \MATLAB{} sparse direct solver ``backslash''. In particular, $A$ is factorized once and for all before the iterative procedures start so that only triangular systems are actually solved during the basis construction. The computational time for the factorization of $A$ is always included in the results that follow.

  In this experiment, LR-ADI-EKSM(G) and LR-ADI-EKSM(MR) perform very similarly. We thus report only the results achieved by the former. 
  
  \begin{table}[h]
    \centering
    \begin{tabular}{rrrrrrrr}
      & & & \textsc{Basis} & \textsc{Projected Pr.} & \textsc{Shift} & \textsc{Etc} & \textsc{Total} \\
      $n$ & $q$ & {\sc It.} & Time (s) & Time (s) & Time (s) & Time (s) & Time (s) \\
      \hline
      360\,000& 1 & 28 & {\color{black}8.00} & {\color{black}0.07} & {\color{black}1.84} & {\color{black}2.33} & {\color{black}12.24}\\
      & 3 & 27 & {\color{black}21.70} & {\color{black}0.26} & {\color{black}10.35} & {\color{black}3.66} & {\color{black}35.97}\\
      & 5 & 27 & {\color{black}32.79} & {\color{black}0.50} & {\color{black}22.38} & {\color{black}4.60} & {\color{black}60.27}\\
      \hline
      640\,000 & 1 & 29 & {\color{black}16.87} & {\color{black}0.09} & {\color{black}2.59} & {\color{black}4.74} & {\color{black}24.29}\\
      & 3 & 31 & {\color{black}42.74} & {\color{black}0.31} & {\color{black}14.91} & {\color{black}7.45} & {\color{black}65.41}\\
      & 5 & 31 & {\color{black}101.15} & {\color{black}0.81} & {\color{black}34.70} & {\color{black}9.26} & {\color{black}145.92}\\
      \hline
      1\,000\,000 & 1 & 26 & {\color{black}28.99} & {\color{black}0.09} & {\color{black}2.64} & {\color{black}7.52} & {\color{black}39.24}\\
      & 3 & 31 & {\color{black}112.60} & {\color{black}0.45} & {\color{black}19.04} & {\color{black}11.67} & {\color{black}143.76}\\
      & 5 & 29 & {\color{black}183.80} & {\color{black}1.12} & {\color{black}40.56} & {\color{black}14.52} & {\color{black}240.00}\\
      
    \end{tabular}
    \caption{Experiment~\ref{Ex.1}. Computational timings devoted to the
      different main steps of LR-ADI-EKSM(G) for different values of the problem
      size $n$ and rank of the right-hand side $q$. 
      \textsc{Basis}: Basis construction (Algorithm~\ref{LR_ADI_EKSM} --- lines~\ref{basis_construction_1},~\ref{Alg.line:basis2}, and~\ref{basis_construction_3}). \textsc{Projected Pr.}: Computation of $Y$ (Algorithm~\ref{LR_ADI_EKSM} --- lines~\ref{projected_problem_1}, and~\ref{projected_problem_1bis}). 
      \textsc{Shift}: Shifts computation
      (Algorithm~\ref{LR_ADI_EKSM} --- line~\ref{shift_computation_1}). \textsc{Etc}: Remainder of the algorithm {\color{black}(e.g., factorization of $A$)}.
      {\sc It.} indicates the number of ADI iterations that have been implicitly performed.}\label{tab1.ex1}
  \end{table}
  
  As expected, the time devoted to the basis construction represents the
  majority of the overall computational efforts. This is the usual case in
  Krylov projection algorithms. This cost increases as $q$ grows. Indeed, a larger subspace is computed making the basis construction, and in particular the orthogonalization step, rather demanding. Having a large dimensional approximation space leads to a more expensive shift computation, as well.
  
  In Figure~\ref{fig1.ex1} {\color{mycolor1}(left $y$-axis)} we illustrate how the dimension of the computed extended Krylov subspace grows in terms of $j$ for $n=360\,000$ and different values of $q$. 
  
  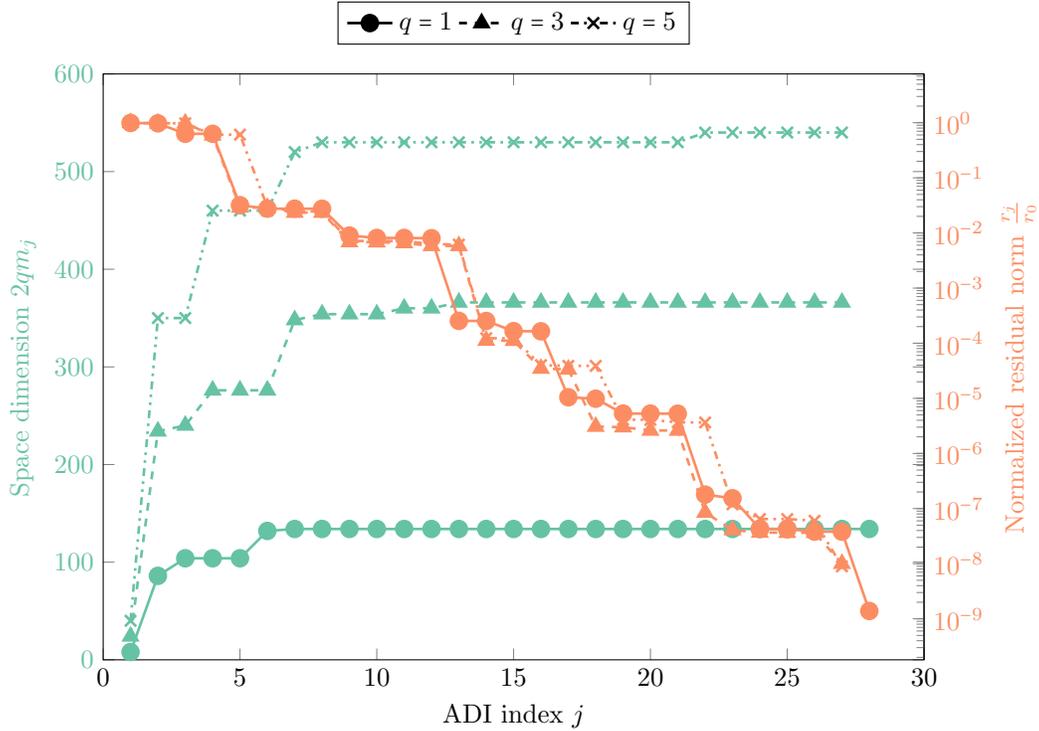
\begin{figure}[h]
    \centering
    \pgfplotscreateplotcyclelist{mylist2}{
{mark options={solid}, mark=*, solid},
{mark options={solid}, mark=triangle*, dashed},
{mark options={solid}, mark=x, dashdotdotted}
}
\begin{tikzpicture}
  \pgfplotstableread{data_figure1.txt}\tablefigureone 
  \pgfplotsset{
    every axis/.append style={
      cycle list name=mylist2,
      xmin = 0, 
      xmax = 30, 
      width = .7\textwidth, 
      height = .5\textwidth, 
      scale only axis,
    },
    legend image post style={black}
  }
  
  \begin{axis}[ 
    ymin = 0, 
    ymax = 600, 
    xtick = {0, 5, 10, 15, 20, 25, 30}, 
    ytick = {0, 100, 200, 300, 400, 500, 600}, 
    xlabel = {ADI index $j$}, 
    ylabel = {\textcolor{mycolor1}{Space dimension $2qm_j$}},
    legend style = {at={(0.5, 1.05)}, anchor=south},
    legend columns = 3
    ]
    \pgfplotsset{
      every axis plot/.append style={mycolor1},
      every y tick label/.append style={mycolor1}
    }
 	\addplot table[x index = 0, y index = 1] {\tablefigureone};
	\addplot table[x index = 3, y index = 4] {\tablefigureone};
 	\addplot table[x index = 6, y index = 7] {\tablefigureone}; 
    \legend{\(q=1\), \(q=3\), \(q=5\)};
  \end{axis}

  \begin{semilogyaxis}[
    axis y line*=right,
    axis x line=none,
    ylabel = {\textcolor{mycolor2}{Normalized residual norm \(\frac{r_{j}}{r_0}\)}}, 
    ]
    
    \pgfplotsset{
      every axis plot/.append style={mycolor2},
      every y tick label/.append style={mycolor2}
    }
    \addplot table[x index = 0, y index = 2] {\tablefigureone};
    \addplot table[x index = 3, y index = 5] {\tablefigureone};
    \addplot table[x index = 6, y index = 8] {\tablefigureone};
  \end{semilogyaxis}
  
\end{tikzpicture} 
    \caption{Experiment~\ref{Ex.1}. {\color{mycolor1}Dimension of the constructed extended Krylov
      subspace} and {\color{mycolor2} computed normalized residual norms} as $j$ grows, i.e.\ the ADI progresses, for problem size $n=360\,000$.}\label{fig1.ex1}
  \end{figure}

  In this experiment, we can notice that the subspace constructed to solve the second shifted linear system, namely $(A+p_2I)S_2=W_1$, is a very rich approximation space in terms of spectral information. Indeed, we need to only slightly expand it 
  to solve the subsequent linear systems {\color{black} without compromising the decrease in the Lyapunov residual norm; see Figure 1} {\color{mycolor2}(right $y$-axis).}
  This means that the majority of the computational efforts are dedicated to solve the second linear system, and we can capitalize on them for $j>2$ reducing the overall workload of the solution process. We would like to mention that such a phenomenon is partially due to the adaptive selection of the inner tolerance $\varepsilon_{\mathtt{inn}}^{(j)}$ coming from~\cite{Kuerschner2018}.
  
  We now compare LR-ADI-EKSM(G) with the function {\tt mess\_lradi} of the
  M-M.E.S.S. package~\cite{SaaKB22-mmess-2.2}, an abstract function handle based
  implementation  of the LR-ADI, and {\tt lradi}, a plain matrix-based
  implementation of the same algorithm.
  
  To this end, we make $B\in\mathbb{R}^n$ the normalized vector of all ones. For having fair comparisons, we employ the shifts computed by the LR-ADI-EKSM(G) in all the different implementations.
  This leads to a very similar trend in the relative residual norm achieved by the routines even though the shifted linear systems in {\tt mess\_lradi} and {\tt lradi} are solved at very high accuracy\footnote{The \MATLAB{} sparse direct solver ``backslash'' is employed for solving $(A+p_{j}I)S_j=W_{j-1}$ for all $j$.}, whereas the relaxation strategy of~\cite{Kuerschner2018} is implemented in LR-ADI-EKSM(G). In Figure~\ref{fig2.ex1}, we report the relative difference between the relative residual norms computed by LR-ADI-EKSM(G) and {\tt mess\_lradi} throughout all the necessary iterations $j$  
  for different problem dimension $n$ {\color{black} along with the values of $\epsilon^{(j)}_{\mathtt{inn}}$ we employed}. In agreement with the results presented in~\cite{Kuerschner2018}, we can notice that {\color{black} the distance between the computed relative residual norms is always rather moderate and smaller than $\epsilon^{(j)}_{\mathtt{inn}}$\footnote{{\color{black}This is true for all the experiments we ran except for $n=640\,000$, at the very last iteration where $r_{27}^{\text{LR-ADI-EKSM}}\approx1.3\times 10^{-8}$ whereas $\epsilon_{\mathtt{inn}}^{(27)}\approx1.6\times 10^{-8}$
}}}. Very similar results are obtained by comparing the residual norms attained by {\tt lradi} in place of {\tt mess\_lradi}.
  
  \begin{figure}[h]
    \centering
  \tikzexternalenable%
  \tikzsetnextfilename{figure2}%
  \filemodCmp{figure2.tikz}{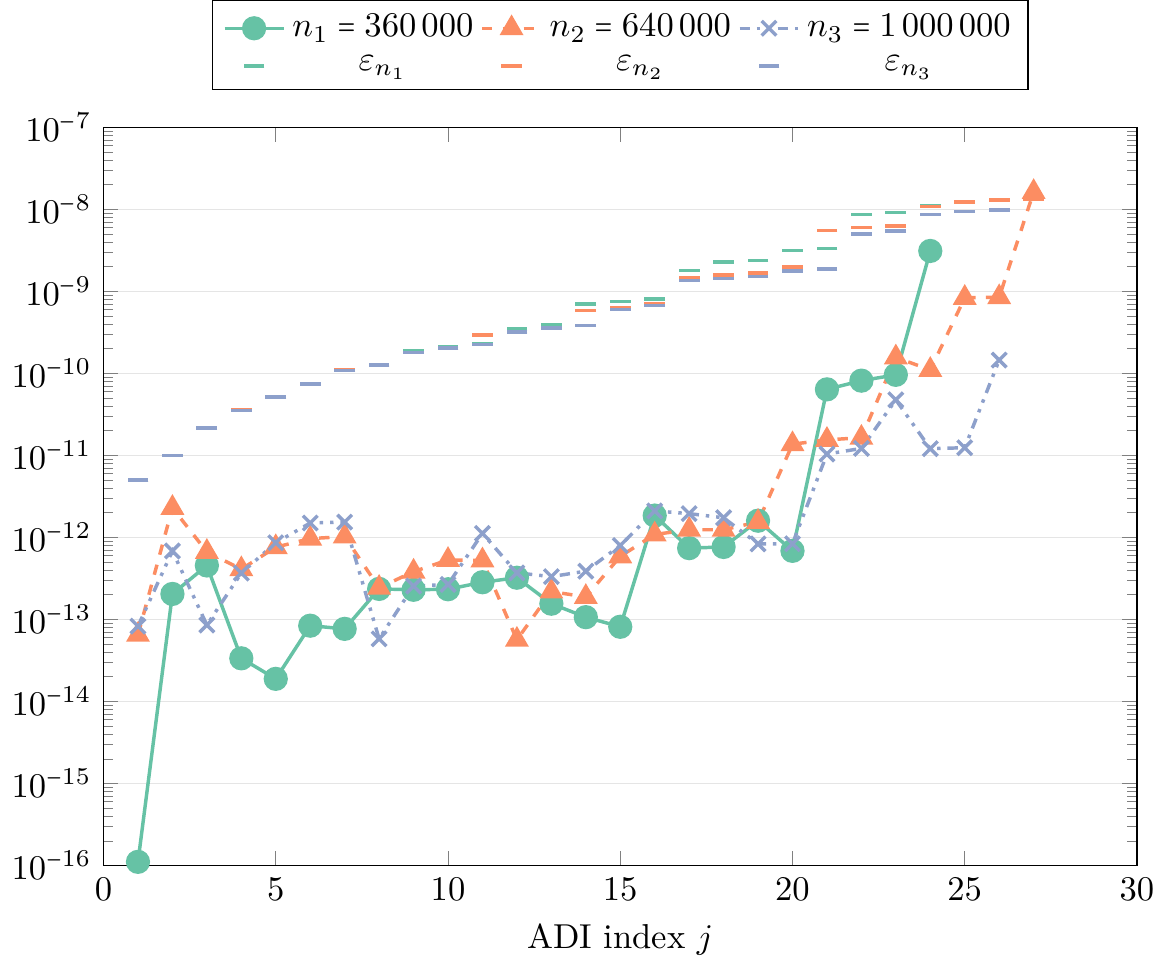}%
  {\tikzset{external/remake next}}{}%
  \begin{tikzpicture}

  \pgfplotstableread{data_figure2.txt}\tablefiguretwo 

  \begin{semilogyaxis}[ 
    width = .7\textwidth, 
    height = .5\textwidth, 
    scale only axis, 
    xmin = 0, 
    xmax = 30, 
    ymin = 1e-16, 
    ymax = 1e-7, 
    xtick = {0, 5, 10, 15, 20, 25, 30}, 
    xlabel = {ADI index $j$}, 
    ymajorgrids,
    legend style = {at={(0.5, 1.05)}, anchor=south},
    legend columns = 3
    ]
 	\addplot table[x index = 0, y index = 1] {\tablefiguretwo};
	\addplot table[x index = 3, y index = 4] {\tablefiguretwo};
 	\addplot table[x index = 6, y index = 7] {\tablefiguretwo}; 
 	\addplot table[x index = 0, y index = 2] {\tablefiguretwo};
	\addplot table[x index = 3, y index = 5] {\tablefiguretwo};
 	\addplot table[x index = 6, y index = 8] {\tablefiguretwo};
	
    \legend{%
		\(n_1=360\,000\), \(n_2=640\,000\), \(n_3=1\,000\,000\),
		\(\varepsilon_{n_{1}}\),\(\varepsilon_{n_{2}}\),\(\varepsilon_{n_{3}}\)
	};
  \end{semilogyaxis} 
\end{tikzpicture} %
  \tikzexternaldisable%

    \caption{Experiment~\ref{Ex.1}. Relative gap
      $\left(\frac{|r_j^{\text{LR-ADI-EKSM}}-r_j^{\texttt{mess\_lradi}}|}%
        {r_j^{\texttt{mess\_lradi}}}\right)$
      between the residual norms $r_j^{\text{LR-ADI-EKSM}}$
      and $r_j^{\texttt{mess\_lradi}}$ computed by LR-ADI-EKSM(G) and {\tt
        mess\_lradi}, respectively, as $j$ grows, i.e.\ ADI converges, and
      different problem sizes $n$, {\color{black}together with the corresponding inner inexact
      solver tolerance \(\varepsilon^{(j)}_{\mathtt{inn}}\), denoted \(\varepsilon_{n}\)
      to relate the problem sizes.}}\label{fig2.ex1}
  \end{figure}
  
  We also compare the routines in terms of computation time. The results are collected in Table~\ref{tab2.ex1}. Since we employ the shifts computed within LR-ADI-EKSM(G) also for {\tt mess\_lradi} and {\tt lradi}, we do not consider the time devoted to the shift computation when reporting the performances of LR-ADI-EKSM(G) in Table~\ref{tab2.ex1}. 
  
  \begin{table}[h]
    \centering
    \begin{tabular}{rrrrr}
      & & LR-ADI-EKSM(G) &{\tt lradi} & {\tt mess\_lradi} \\
      $n$ & {\sc It.} & Time (s) & Time (s) & Time (s) \\
      \hline
      360\,000& 24 & 10.08 & 31.84 & 30.94
      \\
      640\,000& 27 & 19.99 & 65.84 & 66.22 \\
      1\,000\,000& 36 & 39.18 &101.18 & 101.61\\
    \end{tabular}
    \caption{Experiment~\ref{Ex.1}. Computational timings achieved by
      LR-ADI-EKSM(G), {\tt lradi}, and {\tt mess\_lradi} for different problem
      sizes $n$. {\sc It.} indicates the number of ADI iterations that have been
      (implicitly) performed.}%
    \label{tab2.ex1}
  \end{table}
  
  The results in Table~\ref{tab2.ex1} show that, for this experiment, our proposed scheme combined with the relaxation strategy presented in~\cite{Kuerschner2018} leads to a remarkable speed-up of the solution process --- up to 50\% --- when compared to a standard implementation of the LR-ADI method.

\end{example}

\begin{example}\label{Ex.2}\rm
  In the second experiment, we consider a problem similar to~\cite[Example 6]{Palitta2016}. In particular, the matrix $A$ comes from the centered finite difference discretization of the 3-dimensional convection-diffusion operator
  \[
    \mathcal{L}(u)=-\zeta\Delta u+\mathbf{w}\cdot\nabla u,
  \]
  on the unit cube with zero Dirichlet boundary conditions. The convection vector $\mathbf{w}$ is given by $\mathbf{w}=(\phi_1(x)\psi_1(y)\pi_1(z),\,0,\,\pi_3(z))=((1-x^2)yz,\,0,\,e^z)$
  whereas $\zeta>0$. By employing $h$ nodes in each direction, the discretization phase leads to a matrix $A$ that can be written as 
  \[
    A=( D_{h}+\Pi_{3}N^{\tran})\otimes I_h\otimes I_{h}+I_{h}\otimes D_{h}\otimes I_{h}+I_{h}\otimes I_{h}\otimes D_{h}+\Pi_{1}\otimes\Psi_{1}\otimes\Phi_{1}N,
  \]
  where $D_h=\zeta {(h-1)}^{2}\cdot\text{tridiag}(-1,2,-1)\in\mathbb{R}^{h\times h}$, $N=-\frac{(h-1)}{2}\cdot\text{tridiag}(-1,0,1)\in\mathbb{R}^{h\times h}$, and $\Phi_{i}$, $\Psi_{i}$, and $\Pi_{i}$ are diagonal matrices whose diagonal entries correspond to the nodal values of the corresponding functions $\phi_i$, $\psi_i$, and $\pi_i$. See~\cite{Palitta2016} for further details. $B\in\mathbb{R}^n$, $n=h^3$, is a vector with random entries. 

  Due to the 3D nature of the problem, the nonsymmetric linear systems with $A$ involved in the basis construction in LR-ADI-EKSM are solved by GMRES~\cite{Schultz1986}. In particular, we employ the GMRES implementation written by Lund et al~\cite{Kreetal21}, namely the function {\tt bgmres} in~\cite{Lun20}. GMRES is stopped whenever the computed relative residual norm gets smaller than $10^{-10}$.

  It is well-known that (polynomial) Krylov methods for linear systems need to be preconditioned to achieve a fast convergence in terms of number of iterations. To this end, as suggested in~\cite{Palitta2016}, we employ the following preconditioning operator when solving the linear systems with $A$,
  \[
    \mathcal{P}=(D_h+\Pi_3N^{\tran})\otimes I_h\otimes I_h+I_h\otimes D_h\otimes I_{h}+I_{h}\otimes I_{h}\otimes D_{h}+\widebar\pi_{1}I_{h}\otimes\Psi_{1}\otimes\Phi_{1}N,
  \]
  where $\widebar\pi_1$ is the mean value of the function $\pi_1$ in $[0,1]$. At each GMRES iteration, we thus have to invert $\mathcal{P}$, namely we have to compute $\widebar v=\mathcal{P}^{-1}v$ for $v\in\mathbb{R}^n$. This operation is performed by solving the Sylvester equation
  \[
    (D_{h}\otimes I_{h}+I_{h}\otimes D_{h}+\widebar\pi_{1}\Psi_{1}\otimes\Phi_{1}N) \mathbf{\widebar V}+\mathbf{\widebar V}{(D_h+\Pi_3N^{\tran})}^{\tran}=\mathbf{V},
  \]
  where $\mathbf{\widebar V},\mathbf{V}\in\mathbb{R}^{h^2\times h}$ are such that $\text{vec}(\mathbf{\widebar V})=\widebar v$ and $\text{vec}(\mathbf{V})=v$. Since the coefficient matrices in the equation above have moderate dimensions, the Bartels-Stewart method~\cite{Bartels1972} is employed for its solution and the Schur decompositions of the coefficient matrices are computed once and for all before the iterative procedure starts. We always employ a right preconditioning scheme in order to easily have access to the actual residual norm.
  
  Also, for the shifted linear systems with $A+p_{j}I$, within {\ttfamily mess\_lradi} and {\ttfamily lradi}, we employ preconditioned GMRES equipped with the preconditioning operator $\mathcal{P}+p_{j}I$. Once again, this preconditioner is applied by solving the Sylvester equation
  \[
    (D_h\otimes I_h+I_h\otimes D_{h}+\widebar\pi_1\Psi_1\otimes\Phi_1N) \mathbf{\widebar V}+\mathbf{\widebar V}{(D_h+\Pi_{3}N^{\tran}+p_{j}I_{h})}^{\tran}=\mathbf{V}.
  \]
  Even though this is in general a better preconditioner for $A+p_{j}I$ compared to $\mathcal{P}$, its application involves complex arithmetic whenever $\text{Im}(p_{j})\neq 0$ with a consequent increment in the computational efforts devoted to the preconditioning step.
  
  For this experiment, {\tt lradi} is equipped with the relaxation strategy presented in~\cite{Kuerschner2018}.
  
  Also for this experiment, LR-ADI-EKSM(G) and LR-ADI-EKSM(MR) perform very similarly, with LR-ADI-EKSM(MR) achieving slightly better results in terms of computational time. We thus report only the performance of LR-ADI-EKSM(MR).
  
  The results are collected in Table~\ref{tab1.ex2} for different values of $n$ and $\zeta$. In Table~\ref{tab1.ex2} we also report the number of shifts with nonzero imaginary part.
  
  We would like to mention that we ran some experiments with {\tt mess\_lradi}
  where the shifted linear systems were solved by means of the \MATLAB{} sparse
  direct solver ``backslash'' in place of preconditioned GMRES\@. However, for
  this example the potentially higher accuracy of the direct solves did not
  benefit the computation  and the execution times we achieved with
  ``backslash'' could not keep up with the ones reported for GMRES in Table~\ref{tab1.ex2}. We, thus, decided to omit them here.
  
  \begin{table}[t]
    \centering
    \begin{tabular}{rrrcrrrr}
      & & & & LR-ADI-EKSM(MR) &{\tt lradi} & {\tt mess\_lradi} \\
      $\zeta$ & $n$ & {\sc It.} & \#$\{p_j\notin\mathbb{R}\}$& Time (s) & Time (s) & Time (s)\\
      \hline
      0.05 &125\,000 & 20 & 12 & 80.61 & 153.28 & 187.52 \\
      & 512\,000 & 20 & 12 & 812.74 & 1\,342.27 & 2\,161.99 \\
      & 1\,000\,000 & 22 & 10 & 3\,183.54 & 5\,764.21 & 6\,133.54\\
      \hline
      0.005 & 125\,000 & 45 & 44 & 65.23 & 361.58 & 382.11 \\ 
      & 512\,000 & 61 & 60 & 419.45 & 2\,156.94 & 3\,497.51 \\
      & 1\,000\,000 & 67 & 62 & 1\,194.37 & 5\,802.41 & 10\,517.13 \\
      
    \end{tabular}
    \caption{Experiment~\ref{Ex.2}. Computational timings achieved by
      LR-ADI-EKSM(MR), {\tt lradi}, and {\tt mess\_lradi} for different problem
      sizes $n$ and diffusivities $\zeta$. {\sc It.} indicates the number of ADI iterations that have been (implicitly) performed.}\label{tab1.ex2}
  \end{table}
  
  From the results in Table~\ref{tab1.ex2} we can see that LR-ADI-EKSM(MR) is
  very competitive and always achieves computational timings that are
  significantly smaller than the ones required by  {\tt mess\_lradi}. Thanks to the relaxation procedure coming from~\cite{Kuerschner2018}, {\tt lradi} performs better than {\tt mess\_lradi}. 
  
  The performance of all the tested routines is strictly related to the number of complex shifts needed to converge. 
  When this is sizable with respect to the total number of iterations, many of the $n\times n$ linear systems $A+p_{j}I$ within {\tt mess\_lradi} and {\tt lradi} involve complex arithmetic, whereas this is needed only in the solution of the small dimensional least squares problem for the computation of $Y$ in LR-ADI-EKSM(MR).

  We notice that, for a fixed $n$, the computational time of LR-ADI-EKSM(MR) decreases, in general, by reducing $\zeta$, even tough the number of LR-ADI iterations that are implicitly performed increases.
  This is due to the computational efforts required by the solution of the linear systems with $A$ during the basis construction. Indeed, for $\zeta=0.05$, many more GMRES iterations are required than what is necessary for $\zeta=0.005$. In Figure~\ref{fig1.ex2}, we report the number of GMRES iterations needed to solve the linear system with $A$ at each $m$, namely every time a new basis vector of the adopted extended Krylov subspace needs to be computed.

  \begin{figure}[tbp]
    \centering
  \tikzexternalenable%
  \tikzsetnextfilename{figure3}%
  \filemodCmp{figure3.tikz}{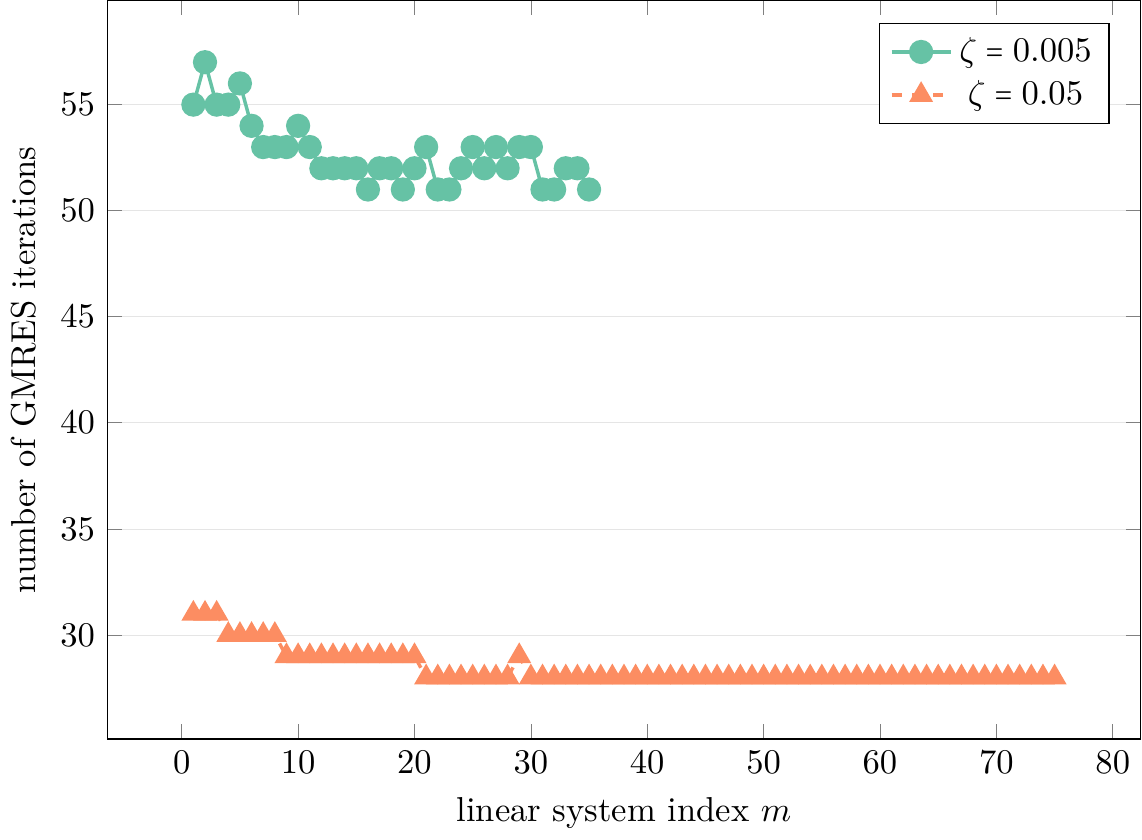}%
  {\tikzset{external/remake next}}{}%
  \begin{tikzpicture}

  \pgfplotstableread{data_figure3.txt}\tablefigurethree 

  \begin{axis}[ 
    width = .7\textwidth, 
    height = .5\textwidth, 
    scale only axis, 
    xlabel = {linear system index $m$}, 
    ylabel = {number of GMRES iterations},
	ymajorgrids,
    legend pos = north east,
    ]
 	\addplot table[x index = 0, y index = 1] {\tablefigurethree};
	\addplot table[x index = 2, y index = 3] {\tablefigurethree};
    \legend{\(\zeta=0.005\), \(\zeta=0.05\)};
  \end{axis} 
\end{tikzpicture} %
  \tikzexternaldisable%

    \caption{Experiment~\ref{Ex.2}. Number of GMRES iterations needed to solve
      the linear systems with $A$ during the basis construction in
      LR-ADI-EKSM(MR) for different values of the diffusivity $\zeta$ 
      and $n=125\,000$.}\label{fig1.ex2}
  \end{figure}

  A rather large number of GMRES iterations is required for solving the linear systems with $A$ in case of $\zeta=0.05$ making the construction of the basis of $\mathbf{EK}^\square_m(A,B)$ more demanding. On the other hand, few GMRES iterations are sufficient to meet the prescribed accuracy for $\zeta=0.005$ and the overall solution procedure turns out to be very successful.
\end{example}

\begin{example}\label{Ex.3}\rm
  In this experiment we compare LR-ADI-EKSM also with
  K-PIK~\cite{Simoncini2007}, since the two routines construct the same
  subspace\footnote{The implementation of K-PIK we employed will be available in the next M-M.E.S.S. release, along with other projection methods for matrix equations. Such implementation is equivalent to the one that can be found on Simoncini's webpage, \url{http://www.dm.unibo.it/~simoncin/software.html}.}. We consider the thermal part of the thermo-elastic modeling of a
  building-block of an experimental machine tool given by the following heat
  equation
  \begin{equation}\label{Ex.3.eq:1}
    \arraycolsep=1.4pt\def\arraystretch{2}
    \left\{
      \begin{array}{rll}
        c_p\rho\frac{\partial T}{\partial t}&=&\lambda\Delta T,\quad \text{in }\Omega,\\
        \lambda\frac{\partial T}{\partial \textbf{n}}&=&f,\quad \text{on }\Gamma_c\subset\partial\Omega, \\
        \lambda\frac{\partial T}{\partial \textbf{n}}&=&\alpha(T_{ext}-T),\quad \text{on }\Gamma_{ext}\subset\partial\Omega, \\
        T(0)&=&0.\\
      \end{array}\right.
  \end{equation}
  \begin{figure}[t]
    \centering
    \includegraphics[width=.5\linewidth]{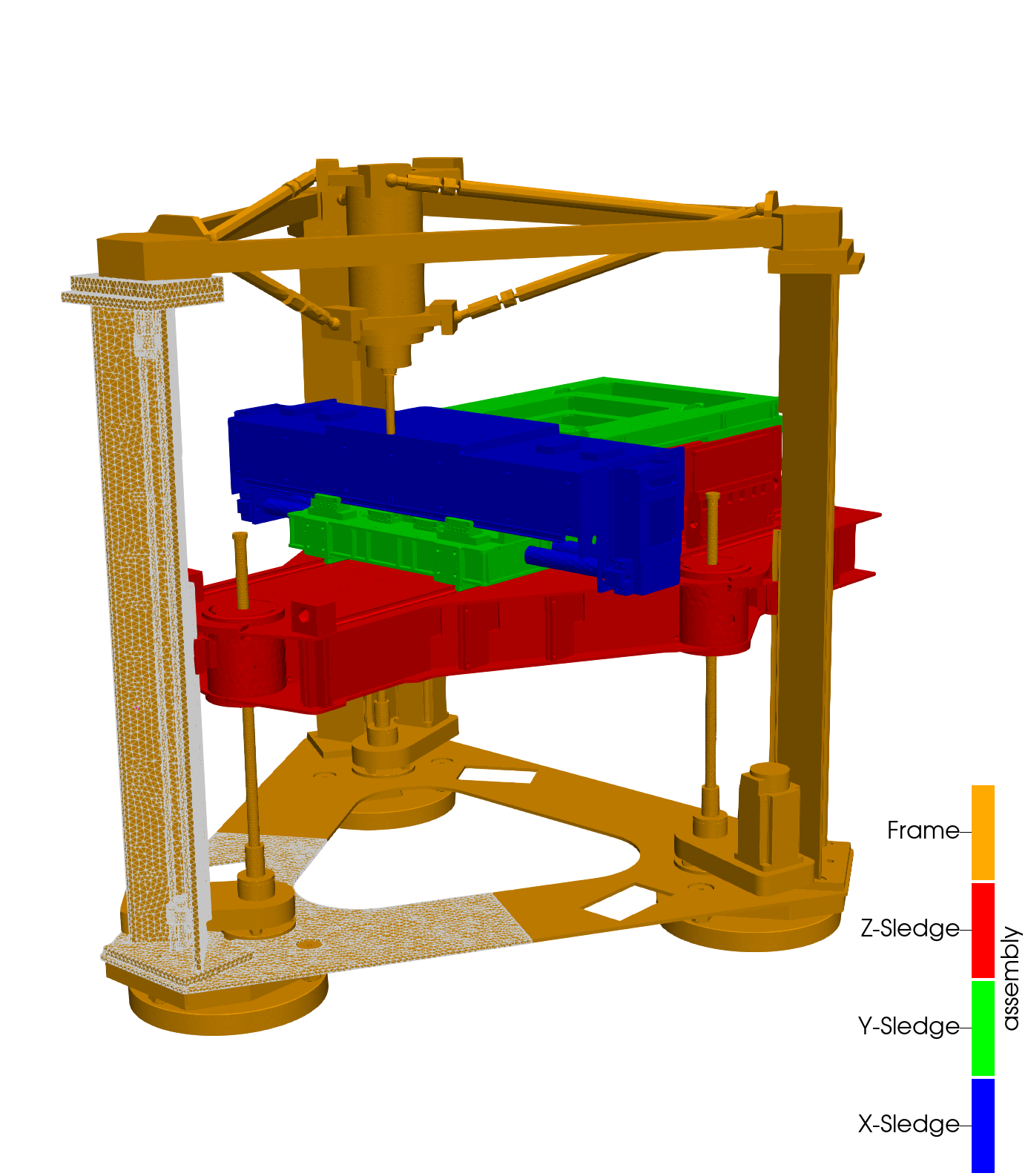}
    \caption{Experiment~\ref{Ex.3}. Finite element grid of the machine frame
        indicated on the CAD model of the full machine. (Source: DFG
        CRC/TR-96 (\url{https://transregio96.de}))}\label{fig:modelCRC}
  \end{figure}
  The discretization in space using the finite element method (here applying the
  proprietary tool ANSYS\footnote{\url{https://www.ansys.com/}}) on the three-dimensional domain, given by the machine frame indicated in
  Figure~\ref{fig:modelCRC}, leads to the LTI system 
  \begin{equation}\label{eq:CRCLTI}
    E\dot T=\left(A-\sum_{i=1}^t \alpha_{i}F_{i}\right) T+Bu(t).
  \end{equation}
  Here, \(A\) represents the discretized Laplacian together with the Robin
  boundary contributions from \(\Gamma_{ext}\) and represented by \(F_i\), while \(B\) results from the
  external control inputs (heats fluxes, e.g.\ induced by the drive motors) on
  \(\Gamma_{c}\). Note that the elastic part of the thermo-elastic model can be
  encoded entirely in the output equation of the corresponding dynamical system
  and is, thus, not relevant here~\cite{morLanSB14}.
  The algebraic problem resulting from this system amounts to a Lyapunov
  equation of the form~\eqref{eq:lyap_gen}. However, due to mass lumping in ANSYS,
  the mass matrix $E$ is diagonal and SPD\@. We can, thus, easily invert its
  square root and consider the Lyapunov equation  
  \[
    E^{-\frac{1}{2}}\left(A-\sum_{i=1}^t \alpha_{i}F_{i}\right)E^{-\frac{1}{2}}\widetilde X +
    \widetilde XE^{-\frac{1}{2}}{\left(A-\sum_{i=1}^t \alpha_{i}F_{i}\right)}^{\tran}E^{-\frac{1}{2}}+E^{-\frac{1}{2}}BB^{\tran}E^{-\frac{1}{2}}=0,\quad \widetilde X=(E^{\frac{1}{2}}XE^{\frac{1}{2}}).
  \]
  So, again, we can efficiently retract to a problem of the
  form~\eqref{eq:lyap}.  Once a low-rank approximation
  $\widetilde Z\widetilde Z^{\tran}$ to $\widetilde X$ is computed, the low-rank
  factor $Z$ such that $ZZ^{\tran}\approx X$ can be retrieved by performing
  $Z=E^{-\frac{1}{2}}\widetilde Z$.

  The actual machine frame in Figure~\ref{fig:modelCRC} consists of several
  parts itself, which are discretized separately. This leads to differently
  sized models of the structure in~\eqref{eq:CRCLTI}. These are reflected
  by the rows of Table~\ref{tab1.ex3}. Accordingly, we solve the Lyapunov
  equation considering different configurations of the PDE~\eqref{Ex.3.eq:1},
  respectively the LTI system in~\eqref{eq:CRCLTI}. In particular, this allows
  us to vary the
  number of degrees of freedom employed in the discretization phase, leading
  to different problem dimensions $n$, modify the Neumann boundary conditions
  obtaining diverse matrices $F_i$, and consider different values for the
  rank $q$ of $B$. Moreover, we set $\alpha_i=10$ for all $i=1,\ldots,t$. 

  The results are collected in Table~\ref{tab1.ex3}. It turns out that the
  Wachspress ADI shifts~\cite{Wachspress2013,Li2002} are particularly effective
  for this experiment, since \(A\) as well as all the \(F_{i}\) and thus
  \(E^{-\frac{1}{2}}\left(A-\sum_{i=1}^t\alpha_{i}F_{i}\right)E^{-\frac{1}{2}}\)
  are symmetric, i.e.\ the spectrum is real. These are the ideal circumstances
  for Wachspress shifts. We, thus, employ those shifts in LR-ADI-EKSM(G) and
  {\tt mess\_lradi}.
  \begin{table}[t]
    \centering
    \begin{tabular}{rrrrrrrrrrr}
      & &  & \multicolumn{3}{r}{LR-ADI-EKSM(G)} &\multicolumn{2}{r}{{\tt mess\_lradi}} & \multicolumn{3}{r}{K-PIK}\\
      $n$ & $t$ & $q$ & {\sc It.} & $\text{dim}\left(\mathbf{EK}^\square\right)$ & Time (s) & {\sc It.} &Time (s) & {\sc It.} & $\text{dim}\left(\mathbf{EK}^\square\right)$ & Time (s) \\
      \hline
      4\,813 & 1& 23  & 54 &644  & 2.73 & 54 & 7.93 & 15 & 736& 8.13\\
      13\,551 &2 & 5 & 53 & 430& 8.89 & 53  &18.09  & 43& 440 &24.93 \\
      25\,872 & 1 & 10 & 63 & 1060&34.32& 63  & 63.37  & 53 & 1080 & 97.74 \\ 
    \end{tabular}
    \caption{Experiment~\ref{Ex.3}. Computational timings achieved by
      LR-ADI-EKSM(G), K-PIK, and {\tt mess\_lradi} for different values of
      problem size $n$, number of Robin boundary conditions~$t$, and rank of the right-hand side $q$. {\sc It.} indicates the number of ADI/K-PIK iterations that have been (implicitly) performed.}\label{tab1.ex3}
  \end{table}
  
  For this experiment, the LR-ADI method, either based on our new formulation or
  on a standard scheme as the one in {\tt mess\_lradi}, turns out to be more
  efficient in terms of computational time than K-PIK\@. Indeed, in spite of the
  smaller number of iterations needed to converge, the large dimension of the
  extended Krylov subspace constructed by K-PIK leads to a rather costly
  solution of the projected equations. Also LR-ADI-EKSM(G) requires the
  construction of an extended Krylov subspace whose dimension is similar to the
  one computed by K-PIK\@. However, if
  $\text{dim}\left(\mathbf{EK}^\square_m(E^{-\frac{1}{2}}\left(A-\sum_{i=1}^t
      \alpha_{i}F_{i}\right)E^{-\frac{1}{2}},E^{-\frac{1}{2}}B)\right)=2mq$, the
  computational cost of solving the inner problems within LR-ADI-EKSM(G) is
  $\mathcal{O}(4m^2q^2)$ floating-point operations (FLOPs) whereas it amounts to $\mathcal{O}(8m^3q^3)$ FLOPs for K-PIK.%

  We conclude by mentioning that in this experiment we relied on the ease of computing $E^{-\frac{1}{2}}$. However, it may happen that the mass matrix $E$ cannot be easily manipulated, e.g., it can be possibly singular, so that the routine presented in this paper cannot be readily applied as we have done in this experiment. 
  We plan to extend the LR-ADI-EKSM framework to this more challenging class of equations in the near future.

\end{example}

\begin{example}\label{Ex.4}\rm
In the last experiment, we show that the proposed framework still needs some
further improvements to efficiently deal with generalized Lyapunov equations of
the form~\eqref{eq:lyap_gen} where the mass matrix $E$ is not diagonal.
To this end, we consider the {\em Steel Profile\/} data set~\cite{morwiki_steel,morBenS05} from the {\sc MORwiki} repository~\cite{morWiki}. 

We compute the observability Gramian of the system, namely the solution $X$ to the equation
\begin{equation}\label{eq:EX4_gen}
A^{\tran}XE+E^{\tran}XA+C^{\tran}C=0,
\end{equation}
where $A\in\mathbb{R}^{n\times n}$ is symmetric negative definite, $C\in\mathbb{R}^{q\times n}$, $q=6$, and $E\in\mathbb{R}^{n\times n}$ is SPD but not diagonal. See~\cite{BenS05b} fur further details on the model.

If $E=LL^{\tran}$ denotes the Cholesky factorization of $E$, we consider the transformed equation 
\begin{equation}\label{eq:EX4_trans}
(L^{-1}A^{\tran}L^{-\tran})\widetilde X+\widetilde X(L^{-1}AL^{-\tran})+L^{-1}C^{\tran}CL^{-\tran}=0,\quad \widetilde X=L^{\tran}XL, 
\end{equation}
and, due to symmetry of \(A\), employ the extended Krylov subspace $\mathbf{EK}_m^\square(L^{-1}AL^{-\tran},L^{-1}C^{\tran})$ as approximation space. Notice that the matrix $L^{-1}AL^{-\tran}$ does not need to be explicitly constructed. See, e.g.,~\cite[Example~5.4]{Simoncini2007}. As before, once $\widetilde Z\widetilde Z^{\tran}\approx\widetilde X$ is computed, we obtain a low-rank approximation to the original $X$ by performing $Z=L^{-\tran}\widetilde Z$.

In Table~\ref{tab1.ex4} we report the results achieved by LR-ADI-EKSM(G) and {\tt mess\_lradi} for different values of $n$.

\begin{table}[t]
    \centering
    \begin{tabular}{rrrrrrrr}
      &  \multicolumn{4}{r}{LR-ADI-EKSM(G)} &\multicolumn{3}{r}{{\tt mess\_lradi}} \\
      $n$  & {\sc It.} & $\text{dim}\left(\mathbf{EK}^\square\right)$ & $\text{rank}(X)$ & Time (s) & {\sc It.} &$\text{rank}(X)$ & Time (s)  \\
      \hline
       20\,209 & 30 & 564 &  180& 7.07&30   & 180 & 0.54  \\
      79\,841 & 31  & 816  & 186 & 34.09  & 31   & 186  & 2.99 \\  
    \end{tabular}
    \caption{Experiment~\ref{Ex.4}. Computational timings achieved by
      LR-ADI-EKSM(G) and {\tt mess\_lradi} for different values of the problem
      size $n$. The running time devoted to the shift computation is not included. {\sc It.} indicates the number of ADI iterations that have been (implicitly) performed.}\label{tab1.ex4}
  \end{table}
  
  From the results in Table~\ref{tab1.ex4} we can readily see that the standard scheme of the LR-ADI method implemented in
  {\tt mess\_lradi} is much faster than LR-ADI-EKSM(G). This is due to the fact that the latter algorithm needs to construct a quite large subspace to achieve the prescribed accuracy with a consequent increment in the computational efforts of the overall procedure. 
  
  We also mention that the rank of the approximate solution computed by LR-ADI-EKSM(G)
  is much lower than the
  dimension of the constructed subspace. We believe that the transformation we performed in~\eqref{eq:EX4_trans}, and thus the employment of $\mathbf{EK}_m^\square(L^{-1}AL^{-\tran},L^{-1}C^T)$, may lead to some spectral redundancy in the adopted approximation subspace and a slower convergence of the method. On the other hand, {\tt mess\_lradi} is able to deal with the original formulation~\eqref{eq:EX4_gen} of the problem. 
  
  To address generalized equations of the form~\eqref{eq:lyap_gen}, 
  we plan to study the employment of different techniques within the Krylov LR-ADI framework we presented in this paper. In particular, the use of nonstandard inner products and (extended) generalized Krylov subspaces~\cite{LiY03} will be explored.
  
\end{example}

\section{Conclusions}\label{Conclusions}
A new formulation of the LR-ADI algorithm for large-scale standard Lyapunov
equations has been proposed.  The computational core of the LR-ADI scheme
consists in the solution of a shifted linear system at each iteration. We showed
that the extended Krylov subspace method can be a valid candidate for this
task. In particular, we described how only one extended Krylov subspace needs to
be constructed to solve all the necessary linear systems required by the LR-ADI
method. The LR-ADI iteration has been completely merged into the extended
Krylov subspace method for shifted linear systems resulting in a novel,
efficient solution procedure. We also showed that many state-of-the-art
algorithms for the shift computation can be easily integrated into our new
scheme. Numerical results demonstrate the potential of our novel algorithm,
especially when this is equipped with the relaxation strategy proposed
in~\cite{Kuerschner2018}, and many complex shifts are needed to converge.

In future work we will consider more involved Lyapunov equations of the form~\eqref{eq:lyap_gen} that cannot be easily transformed into~\eqref{eq:lyap}. While standard implementations of the LR-ADI method naturally address such a scenario by solving linear systems of the form $A+p_{j}E$, further care has to be taken to employ the scheme we presented in this paper. Indeed, the shifted Arnoldi relation~\eqref{eq:shiftedArnoldi} can no longer be exploited. The use of non-standard inner products and generalized Krylov subspace methods~\cite{LiY03} will be investigated.

The framework presented in this paper can be generalized to enhance other LR-ADI-like algorithms for matrix equations. For instance, the LR-ADI method for Sylvester equations~\cite{Benner2009}, or LR-RADI schemes for Riccati equations~\cite{Benner2018,Bertram2020} can be equipped with a procedure similar to the one we proposed here.

\section*{Acknowledgments}
The second author is member of the Italian INdAM Research group GNCS.%

The work presented in this paper has been carried out when the second author was affiliated with the Research Group Computational Methods in Systems
  and Control Theory (CSC),
  Max Planck Institute for Dynamics of Complex Technical Systems, Sandtorstra\ss{e} 1, 39106 Magdeburg, Germany.
  
  The \emph{Steel Profile} dataset is available in the {\sc MORwiki} repository~\cite{morWiki}.
  All the other datasets generated during and/or analysed during the current study are available from the corresponding author on reasonable request.

\bibliographystyle{siamplain}
\bibliography{Krylov_ADI}

\end{document}